\newcommand{\Hom}{\operatorname{Hom}\nolimits}
\renewcommand{\Im}{\operatorname{Im}\nolimits}
\newcommand{\Ker}{\operatorname{Ker}\nolimits}
\newcommand{\Id}{\operatorname{id}\nolimits}
\newcommand{\Ann}{\operatorname{ann}\nolimits}
\newcommand{\Tor}{\operatorname{Tor}\nolimits}
\newcommand{\Ext}{\operatorname{Ext}\nolimits}
\newcommand{\Ho}{\operatorname{H}\nolimits}
\newcommand{\rank}{\operatorname{rank}\nolimits}
\newcommand{\m}{\operatorname{\mathfrak{m}}\nolimits}
\newcommand{\cx}{\operatorname{cx}\nolimits}
\newcommand{\lcx}{\operatorname{\ell cx}\nolimits}
\newcommand{\comments}[1]{}
\newtheorem{theorem}{Theorem}[section]
\newtheorem{corollary}[theorem]{Corollary}
\newtheorem{lemma}[theorem]{Lemma}
\newtheorem{proposition}[theorem]{Proposition}
\newtheorem*{theoremempty}{Theorem}
\theoremstyle{definition}
\theoremstyle{definition}
\newtheorem*{question}{Question}
\newtheorem{example}[theorem]{Example}
\theoremstyle{definition}
\theoremstyle{remark}
\newtheorem*{remark}{Remark}
\theoremstyle{remark}
\theoremstyle{definition}
\begin{document}

\title{Homological algebra modulo exact zero-divisors}

\author{Petter Andreas Bergh, Olgur Celikbas and David A. Jorgensen}

\address{Petter Andreas Bergh \\ Institutt for matematiske fag \\
NTNU \\ N-7491 Trondheim \\ Norway} \email{bergh@math.ntnu.no}
\address{Olgur Celikbas \\ Department of Mathematics \\ University of Kansas \\ Lawrence \\ KS 66045 \\ USA} 
\curraddr{Department of Mathematics, University of Missouri, Columbia, MO 65211, USA}
\email{celikbaso@missouri.edu}
\address{David A.\ Jorgensen \\ Department of Mathematics \\ University
of Texas at Arlington \\ Arlington \\ TX 76019 \\ USA}
\email{djorgens@uta.edu}

\subjclass[2000]{13D03}

\keywords{Exact zero divisors, (co)homology, complexity}

\date{\today}

\begin{abstract}
We study the homological behavior of modules over local rings modulo exact zero-divisors.
We obtain new results which are in some sense ``opposite" to those known for modules over local rings modulo regular elements.
\end{abstract}

\maketitle

\section{Introduction}

Given a local (meaning also commutative and Noetherian) ring $S$ and an ideal $I$, one may ask whether the homological behavior of modules over $S/I$ is related to that over $S$. In general this is hopeless; one needs to restrict the ideal $I$. When $I$ is generated by a regular sequence, there is a well-developed and powerful theory relating the homological properties of modules over these two rings (see for instance \cite{Avramov1}, \cite{AvBu}, \cite{Be}, \cite{Dao}, \cite{Eisenbud}, \cite{Gulliksen1}, \cite{Jo1}, and \cite{Jo2}). For example, suppose that the ideal $I$ is generated by a single regular element $x\in S$, denote the factor ring $S/(x)$ by $R$, and let $M$ and $N$ be $R$-modules. Then a primary result is that
$$\Tor^R_1(M,N) = \Tor^R_2(M,N) = \cdots = \Tor^R_n(M,N) =0$$ for some $n \ge 2$, implies 
$$\Tor^S_2(M,N) = \cdots = \Tor^S_n(M,N) =0.$$ In other words, vanishing of homology over $R$ implies the vanishing of homology over $S$. An analogous statement for cohomology also holds.

Another primary result compares the complexity of a finitely generated $R$-module with its complexity as an $S$-module (see Section $3$ for the definition of complexity). Namely there are inequalities \cite[3.2(3)]{Avramov1}:
$$\cx_S(M) \le \cx_R(M) \le \cx_S(M) +1.$$

In this paper we study the case where the element $x \in S$ is in some sense the ``next best thing" to being a regular element. More precisely, we consider the case where the annihilator of $x$ is a nonzero principal ideal whose annihilator is also principal (and therefore is the ideal $(x)$). Following \cite{HenriquesSega}, the element $x$ is said to be an \emph{exact zero-divisor} if it is nonzero, belongs to the maximal ideal of $S$, and there exists another element $y \in S$ such that $\Ann_S (x) = (y)$ and $\Ann_S (y) = (x)$. In this case we say that $(x,y)$ is a \emph{pair of exact zero-divisors} of $S$. The ideal $(x)$ is then an example of a \emph{quasi-complete intersection ideal}, a notion introduced in \cite{AvramovHenriquesSega}. In that same paper results relating certain invariants of modules over $S$ with those over $S/(x)$ are proved. We continue along similar, but more homological lines, and show that even if the element $x$ is the next best thing to being regular, namely an exact zero-divisor, then the homological relationships between $S/(x)$-modules over $S/(x)$ and over $S$ change dramatically compared to the case where $x$ is regular. Two of our main results in Section 2 concern the vanishing of (co)homology. In particular the result for homology takes the following form:

\begin{theoremempty}
Let $R=S/(x)$ where $S$ is a local ring and $(x,y)$ is a pair of exact zero-divisors in $S$. Furthermore, let $M$ and $N$ be $R$-modules such that $yN=0$. If there exists an integer $n \ge 2$ such that $\Tor^{R}_{i}(M,N)=0$ for $1 \le i \le n$, then $\Tor^{S}_{i}(M,N) \cong M \otimes_S N$ for $1 \le i \le n-1$.
\end{theoremempty}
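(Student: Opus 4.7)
The plan is to apply the Cartan--Eilenberg change of rings spectral sequence
\[
E^2_{p,q} = \Tor^R_p\bigl(M,\, \Tor^S_q(R, N)\bigr) \Longrightarrow \Tor^S_{p+q}(M, N),
\]
which is first-quadrant and hence convergent, after first reducing the $E^2$-page to a very simple form by computing $\Tor^S_q(R, N)$ explicitly.

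The key input is a computation of $\Tor^S_q(R, N)$ for every $q \geq 0$. Because $(x, y)$ is a pair of exact zero-divisors, the quotient $R = S/(x)$ admits the periodic $S$-free resolution
\[
\cdots \xrightarrow{y} S \xrightarrow{x} S \xrightarrow{y} S \xrightarrow{x} S \to R \to 0,
\]
whose differentials alternate between multiplication by $x$ and by $y$. Tensoring this resolution with $N$ over $S$ yields a complex whose every term is $N$ and whose every differential vanishes, since $xN = 0$ ($N$ being an $R$-module) and $yN = 0$ by hypothesis. Hence $\Tor^S_q(R, N) \cong N$ as $R$-modules for every $q \geq 0$.

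Substituting into the spectral sequence gives $E^2_{p,q} \cong \Tor^R_p(M, N)$, independent of $q$. The vanishing hypothesis then forces $E^2_{p,q} = 0$ for $1 \leq p \leq n$, while $E^2_{0,q} \cong M \otimes_R N \cong M \otimes_S N$. Fix $k$ with $1 \leq k \leq n-1$. Every differential into $E^r_{0, k}$ has source $E^r_{r,\, k - r + 1}$: if $2 \leq r \leq k + 1 \leq n$ the source vanishes by the hypothesis, and if $r \geq k + 2$ then the source lies outside the first quadrant. Differentials out of $E^r_{0, k}$ target negative $p$-degree and so vanish. The remaining positions on the antidiagonal have $1 \leq p \leq k \leq n - 1$, so $E^2_{p, q} = 0$ there as well. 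Since only the single spot $(0, k)$ survives on the antidiagonal, there is no extension problem, and reading off the abutment yields $\Tor^S_k(M, N) \cong M \otimes_S N$ for $1 \leq k \leq n - 1$.

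The main obstacle will be setting up the change of rings spectral sequence correctly: the $S$-module $R$ has infinite projective dimension, so one should use the general form built from a Cartan--Eilenberg $S$-projective resolution of an $R$-projective resolution of $M$. A more hands-on alternative would bypass the spectral sequence by observing that the complex $F_\bullet \otimes_S N$ (with $F_\bullet$ the periodic $S$-free resolution of $R$) has vanishing differentials and so splits in the derived category as $\bigoplus_{j \geq 0} N[j]$; a base change argument would then give $\Tor^S_k(M, N) \cong \bigoplus_{i = 0}^{k} \Tor^R_i(M, N)$, and the conclusion would again follow from the vanishing hypothesis.
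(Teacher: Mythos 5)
Your main argument is correct and is essentially the paper's own proof: the paper first establishes a more general statement (Theorem \ref{mainvanishinghomology}, with no hypothesis on $yN$ but with vanishing assumed for $\Tor^R_i(M,N)$, $\Tor^R_i(M,N/yN)$ and $\Tor^R_i(M,\Ann_N(y))$) via the same change-of-rings spectral sequence and the same computation of $\Tor^S_q(R,N)$ from the periodic resolution, and then deduces the stated result as the special case $yN=0$, exactly as you do directly. Your handling of the incoming/outgoing differentials at $E^r_{0,k}$ and of the filtration of the abutment matches the paper's.

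One caution about your closing ``hands-on alternative'': it is not correct. The complex $F_\bullet\otimes_S N$ does have zero differentials, so $R\otimes^{\mathbf L}_S N$ splits as $\bigoplus_{j\ge 0}N[j]$ \emph{in the derived category of $S$-modules}; but the base change $M\otimes^{\mathbf L}_S N\simeq M\otimes^{\mathbf L}_R(R\otimes^{\mathbf L}_S N)$ requires the splitting to hold in $D(R)$ (i.e., for a representative such as $R\otimes_S Q$ with $Q$ an $S$-free resolution of $N$), and formality over $S$ does not transfer to formality over $R$. Indeed, the conclusion $\Tor^S_k(M,N)\cong\bigoplus_{i=0}^{k}\Tor^R_i(M,N)$ fails for $S=k[[x]]/(x^3)$, $R=S/(x^2)\cong k[[x]]/(x^2)$, $M=N=k$: there $\dim_k\Tor^S_k(k,k)=1$ while $\sum_{i=0}^{k}\dim_k\Tor^R_i(k,k)=k+1$. (This is precisely why the paper only gets the inequality $\beta^S_n(M,N)\le\sum_{i=0}^n\beta^R_i(M,N)$ in Theorem \ref{bettipair}, and notes it can be strict.) The spectral sequence is the right tool because its higher differentials out of the $p=n$ column genuinely can be nonzero; your main proof only needs them to vanish where the hypothesis forces them to.
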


Compared to the vanishing result for the case where $x$ is regular, the conclusion of the previous theorem is opposite in the sense that it is a non-vanishing result: vanishing of homology over $R$ implies the \emph{non-vanishing} of homology over $S$ (when the modules involved are nonzero and finitely generated$.$) For cohomology, we obtain the following analogue:

\begin{theoremempty}
Let $R=S/(x)$ where $S$ is a local ring and $(x,y)$ is a pair of exact zero-divisors in $S$. Furthermore, let $M$ and $N$ be $R$-modules such that $yN=0$. If there exists an integer $n \ge 2$ such that $\Ext_{R}^{i}(M,N)=0$ for $1 \le i \le n$, then $\Ext_{S}^{i}(M,N) \cong \Hom_S (M,N)$ for $1 \le i \le n-1$.
\end{theoremempty}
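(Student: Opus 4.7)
The plan is to mirror the Tor argument and invoke the Cartan--Eilenberg change of rings spectral sequence
\[
E_2^{p,q} = \Ext_R^p\bigl(M,\Ext_S^q(R,N)\bigr) \Longrightarrow \Ext_S^{p+q}(M,N),
\]
which is available here because $\Hom_S(R,-)$ sends injective $S$-modules to injective $R$-modules (it is right adjoint to the exact forgetful functor). The first step is to identify the coefficient modules. Since $(x,y)$ is a pair of exact zero-divisors in $S$, the complex
\[
\cdots \to S \xrightarrow{y} S \xrightarrow{x} S \xrightarrow{y} S \xrightarrow{x} S \to R \to 0
\]
is a free $S$-resolution of $R$. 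Applying $\Hom_S(-,N)$ yields the complex $N \xrightarrow{x} N \xrightarrow{y} N \xrightarrow{x} \cdots$, in which every differential is zero because $xN = 0$ (as $N$ is an $R$-module) and $yN = 0$ (by hypothesis). Hence $\Ext_S^q(R,N) \cong N$ for all $q \ge 0$, the $E_2$-page becomes $E_2^{p,q} = \Ext_R^p(M,N)$ in every row, the vanishing hypothesis reads $E_2^{p,q} = 0$ for $1 \le p \le n$, and $E_2^{0,q} = \Hom_R(M,N) = \Hom_S(M,N)$.

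Next, I would fix $i$ with $1 \le i \le n-1$ and examine the diagonal $p+q=i$ on $E_\infty$. The positions with $1 \le p \le i$ already vanish on $E_2$, since $i \le n-1$, and therefore vanish on $E_\infty$. For the surviving position $E_r^{0,i}$, no differential enters (the source of any such $d_r$ sits in a negative column), and every outgoing $d_r \colon E_r^{0,i} \to E_r^{r,\,i-r+1}$ with $r \ge 2$ has zero target: if $r \le n$ then $E_2^{r,i-r+1}$ lies in the vanishing strip $1 \le p \le n$, while if $r \ge n+1$ then $i-r+1 \le (n-1)-n < 0$ forces the target to be zero trivially. Therefore $E_\infty^{0,i} = E_2^{0,i} = \Hom_S(M,N)$, and the filtration on $\Ext_S^i(M,N)$ has a single nontrivial graded piece, yielding the claimed isomorphism.

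The nontrivial ingredient is the identification $\Ext_S^q(R,N) \cong N$, which rests on the $2$-periodic free $S$-resolution coming from the exact zero-divisor pair together with the condition that both $x$ and $y$ annihilate $N$; the rest is routine spectral sequence bookkeeping. The bound $i \le n-1$ is intrinsic to this argument, because at $i = n$ the differential $d_{n+1} \colon E_{n+1}^{0,n} \to E_{n+1}^{n+1,0}$ has target a subquotient of $\Ext_R^{n+1}(M,N)$, which is not required to vanish.
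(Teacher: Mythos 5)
Your proof is correct and follows essentially the same route as the paper: the paper proves a more general statement (without assuming $yN=0$) via the same change-of-rings spectral sequence $\Ext^p_R(M,\Ext^q_S(R,N))\Rightarrow\Ext^{p+q}_S(M,N)$, identifying the coefficients $\Ext^q_S(R,N)$ from the two-periodic resolution of $R$ over $S$, and then deduces this statement as the special case where $\Ann_N(y)=N=N/yN$. Your degeneration and filtration bookkeeping matches the paper's argument.
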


We also compare the complexities of finitely generated modules over $R$ and over $S$. Similar to our previous results, we show that such a comparison is quite different from the case where $x$ is regular. The following theorem is the main result of section $3$.

\begin{theoremempty}
Let $R=S/(x)$ where $S$ is a local ring and $x$ is an exact zero-divisor in $S$. If $M$ is a finitely generated $R$-module, then for any $n$ there are inequalities
$$\beta^R_n(M) - \sum_{i=0}^{n-2} \beta^R_i(M) \le \beta^S_n(M) \le \sum_{i=0}^n \beta^R_i(M)$$
of Betti numbers. In particular, the inequality $\cx_S(M) \le \cx_R(M)+1$ holds.
\end{theoremempty}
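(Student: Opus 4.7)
The plan is to exploit the change-of-rings (Cartan--Eilenberg) spectral sequence
$$E^2_{p,q} = \Tor^R_p\bigl(M,\Tor^S_q(R,k)\bigr) \Rightarrow \Tor^S_{p+q}(M,k),$$
where $k$ is the common residue field of $S$ and $R$. Since $(x,y)$ is a pair of exact zero-divisors with $x$ and $y$ in the maximal ideal of $S$, the $2$-periodic complex
$$\cdots \xrightarrow{y} S \xrightarrow{x} S \xrightarrow{y} S \xrightarrow{x} S \to R \to 0$$
is a minimal $S$-free resolution of $R$. Tensoring with $k$ annihilates every differential, so $\Tor^S_q(R,k)\cong k$ for all $q\geq 0$. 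The induced $R$-action on these groups factors through $k$, so $E^2_{p,q}$ is a $k$-vector space of dimension $\beta^R_p(M)$, independent of $q$.

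The upper bound is immediate from convergence: each $E^\infty_{p,q}$ is a subquotient of $E^2_{p,q}$, hence
$$\beta^S_n(M) \;=\; \sum_{p+q=n}\dim_k E^\infty_{p,q} \;\leq\; \sum_{p=0}^n \dim_k E^2_{p,\,n-p} \;=\; \sum_{i=0}^n \beta^R_i(M).$$

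For the lower bound I focus on the bottom row $q=0$. No differential can enter it, since an incoming $d^r$ would originate from $q=1-r<0$; consequently $E^{r+1}_{n,0}=\ker(d^r_{n,0})$ at each page, and the dimension can only drop through the outgoing maps $d^r_{n,0}\colon E^r_{n,0}\to E^r_{n-r,\,r-1}$ for $r=2,\ldots,n$. Since $\dim_k E^r_{n-r,\,r-1}\leq \beta^R_{n-r}(M)$, telescoping yields
$$\dim_k E^\infty_{n,0} \;\geq\; \beta^R_n(M) - \sum_{r=2}^{n}\beta^R_{n-r}(M) \;=\; \beta^R_n(M) - \sum_{i=0}^{n-2}\beta^R_i(M),$$
and $\beta^S_n(M)\geq\dim_k E^\infty_{n,0}$ gives the desired lower bound. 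The complexity inequality then drops out: if $\beta^R_n(M)$ is eventually bounded by a polynomial in $n$ of degree $\cx_R(M)-1$, then $\sum_{i=0}^n\beta^R_i(M)$ is eventually bounded by a polynomial of degree $\cx_R(M)$, forcing $\cx_S(M)\leq\cx_R(M)+1$.

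The substantive input is the exactness of the $2$-periodic complex resolving $R$ over $S$, which is precisely what the exact zero-divisor hypothesis provides; without it the spectral sequence would not degenerate to something so transparent. The most delicate bookkeeping step is the lower bound, where it is essential that the bottom row receives no incoming differentials, so that the dimension loss from $E^2_{n,0}$ to $E^\infty_{n,0}$ can be estimated entirely in terms of the targets of the outgoing differentials, which live at positions $(n-r,r-1)$ with $r\geq 2$.
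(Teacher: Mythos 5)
Your proof is correct and follows essentially the same route as the paper: the change-of-rings spectral sequence $\Tor^R_p(M,\Tor^S_q(R,N))\Rightarrow\Tor^S_{p+q}(M,N)$, with the upper bound from summing $E^\infty$-terms along the filtration and the lower bound from the fact that $E^\infty_{n,0}$ is a quotient of $\Tor^S_n(M,k)$ whose dimension loss is controlled by the targets $E^r_{n-r,r-1}$ of the outgoing differentials. The only difference is that the paper proves the inequality for a general pair $(M,N)$ with $yN=0$ and $\ell(M\otimes_R N)<\infty$ and then specializes to $N=k$, whereas you work with $N=k$ throughout; the content is identical.
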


In the final section, Section 4, we discuss canonical endomorphisms of complexes of finitely generated free $R$-modules, and canonical elements of $\Ext^2_R(M,M)$, for finitely generated $R$-modules $M$, in the case
where $R=S/(x)$ and $(x,x)$ is a pair of exact zero-divisors of $S$.  The main result, Theorem \ref{thmlift}, equates the ability to lift a finitely generated $R$-module $M$ from $R$ to $S$ to the triviality of the canonical element in $\Ext^2_R(M,M)$.  This generalizes classical results (see, for example, \cite{ADS}) on lifting modules from $T/(x)$ to $T/(x^2)$ in the case where $x$ is a non-zero-divisor of the local ring $T$ (cf. Example \ref{noembdim} below).

\section{Vanishing results}

In this section we prove our vanishing results, starting with the homology version. We fix a local ring $S$, a pair of exact zero-divisors $(x,y)$, and denote the local ring $S/(x)$ by $R$. It should be mentioned that the modules we consider in this section are not necessarily assumed to be finitely generated.

Since a (deleted) free resolution of $R$ over $S$ has the form
$$\cdots \to S \xrightarrow{y} S \xrightarrow{x} S \xrightarrow{y} S \xrightarrow{x} S \to 0$$
one has for any $R$-module $N$ the following
\[\tag{$\dagger$}
\Tor^S_q(R,N) \cong \left\{
\begin{array}{ll}
N & \text{for } q=0 \\
N/yN & \text{for } q>0 \text{ odd} \\
\Ann_N (y) & \text{for } q>0 \text{ even}
\end{array} \right.
\]
and
\[\tag{$\ddagger$}
\Ext_S^q(R,N) \cong \left\{
\begin{array}{ll}
N & \text{for } q=0 \\
\Ann_N (y) & \text{for } q>0 \text{ odd} \\
N/yN& \text{for } q>0 \text{ even}
\end{array} \right.
\]

Our main theorem on the vanishing of homology is the following.

\begin{theorem}\label{mainvanishinghomology}
Let $R=S/(x)$ where $S$ is a local ring and $(x,y)$ is a pair of exact zero-divisors in $S$. Furthermore, let $M$ and $N$ be $R$-modules. If there exists an integer $n \ge 2$ such that
$\Tor^{R}_{i}(M,N)=\Tor^R_i(M,N/yN)=\Tor^R_i(M,\Ann_N (y))=0$ for $1 \le i \le n$, then
$$\Tor^{S}_{i}(M,N) \cong \left \{
\begin{array}{ll}
M \otimes_S N & \text{for } i=0  \\
M \otimes_S N/yN & \text{for } 0<i<n \text{ and $i$ odd} \\
M \otimes_S \Ann_N (y) & \text{for } 0<i<n \text{ and $i$ even}.
\end{array} \right.$$
\end{theorem}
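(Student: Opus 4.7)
The approach is to apply the standard change of rings spectral sequence
$$E^2_{p,q} = \Tor^R_p\bigl(M, \Tor^S_q(R, N)\bigr) \Longrightarrow \Tor^S_{p+q}(M, N),$$
which arises by hyper-tensoring $M$ with a Cartan--Eilenberg lift of an $R$-projective resolution of $N$ to an $S$-projective resolution. Formula $(\dagger)$ identifies the $E^2$-page row by row: the bottom row is $E^2_{p,0} = \Tor^R_p(M, N)$, the odd positive rows are $E^2_{p,q} = \Tor^R_p(M, N/yN)$, and the even positive rows are $E^2_{p,q} = \Tor^R_p(M, \Ann_N(y))$. The three vanishing hypotheses together say that the entire vertical strip $\{(p, q) : 1 \le p \le n,\ q \ge 0\}$ is zero on the $E^2$-page.

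Consequently, for each total degree $i$ with $0 \le i \le n-1$, the only $E^2$-term with $p + q = i$ that can possibly contribute to $\Tor^S_i(M, N)$ is the edge term $E^2_{0, i}$. It remains to verify that all differentials touching $E^r_{0, i}$ vanish, so that $E^\infty_{0, i} = E^2_{0, i}$. The outgoing differentials $d^r \colon E^r_{0, i} \to E^r_{-r, i+r-1}$ vanish trivially, their target lying outside the first quadrant. An incoming differential $d^r \colon E^r_{r, i-r+1} \to E^r_{0, i}$ with $r \ge 2$ has first-quadrant source only when $r \le i + 1$, and since $i + 1 \le n$, the vanishing strip swallows the source: $E^2_{r, i-r+1} = 0$ forces $E^r_{r, i-r+1} = 0$, so the differential is zero.

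Hence $\Tor^S_i(M, N) \cong E^2_{0, i}$ for every $0 \le i \le n - 1$. Combining $(\dagger)$ with the canonical identification $M \otimes_R L \cong M \otimes_S L$ for any $R$-module $L$ (valid because $x$ annihilates both factors), the three cases $L = N$, $L = N/yN$, $L = \Ann_N(y)$ yield exactly the three displayed isomorphisms in the conclusion.

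There is no real conceptual obstacle; the only care required is the spectral sequence bookkeeping, namely pinning down precisely which differentials could affect the edge column in the relevant range. This reduces to the single index inequality $r \le i + 1 \le n$, which is immediate from $i \le n - 1$, after which the conclusion is a direct reading of $(\dagger)$.
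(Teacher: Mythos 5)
Your proposal is correct and follows essentially the same route as the paper: the same change-of-rings spectral sequence $\Tor_p^R(M,\Tor_q^S(R,N))\Rightarrow\Tor_{p+q}^S(M,N)$, the identification of the $E^2$-page via $(\dagger)$, the vanishing of the strip $1\le p\le n$, and the resulting degeneration at the edge column. The only cosmetic difference is that the paper spells out the filtration $\Phi^{j}H_i$ explicitly, whereas you invoke the standard ``single surviving term on the diagonal'' convergence argument, which amounts to the same thing.
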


\begin{proof}
Consider the first quadrant change of rings spectral sequence \cite[Theorem 10.73]{Rot}
$$
\Tor_p^R \left ( M, \Tor_q^S(R,N) \right ) \underset{p}{\Longrightarrow} \Tor_{p+q}^S(M,N).
$$
\comments{$$
\xymatrixcolsep{.5in}\xymatrix{
\vdots & \vdots & \vdots & \iddots\\
E^2_{0,2}  & E^2_{1,2}  &  E^2_{2,2} & \cdots \\
E^2_{0,1}  & E^2_{1,1}  &  E^2_{2,1} \ar[llu]_{d^2_{2,1}} & \cdots \\
E^2_{0,0}  & E^2_{1,0}  &  E^2_{2,0} \ar[llu]_{d^2_{2,0}} & \cdots }
$$}
From ($\dagger$), the term $E^2_{p,q}$ is given by
$$E^2_{p,q} \cong \left \{
\begin{array}{ll}
\Tor^R_p(M,N) & \text{for } q=0 \\
\Tor^R_p(M,N/yN) & \text{for } q>0 \text{ odd} \\
\Tor^R_p(M, \Ann_N (y) ) & \text{for } q>0 \text{ even}.
\end{array} \right.$$
The vanishing assumptions imply that columns $1$ through $n$ of the $E^2$-page of this spectral sequence vanish, i.e., \ $E^2_{p,q} =0$ for all $q \in \mathbb{Z}$ and $1 \le p \le n$. Fixing such $p$ and $q$, we see that $E^{\infty}_{p,q}$ also vanishes since this term is a subquotient of $E^2_{p,q}$. Letting $H_i$ denote $\Tor^S_i(M,N)$ for all $i$, we have a filtration $\{\Phi^jH_i\}$ of
$H_i$ satisfying
$$0=\Phi^{-1}H_i\subseteq\Phi^0H_i\subseteq\cdots\subseteq\Phi^{i-1}H_i\subseteq\Phi^iH_i=H_i,$$ with $E^\infty_{j,i-j}\cong\Phi^jH_i/\Phi^{j-1}H_i$ for all $i$ and $j$.  Thus the vanishing of $E^\infty_{p,q}$ implies that $\Phi^p H_{p+q} = \Phi^{p-1} H_{p+q}$, that is, $\Phi^p H_q = \Phi^{p-1} H_q$ for all $q \in \mathbb{Z}$ and $1 \le p \le n$.

Now consider the zeroth column of the $E^2$-page. For a positive $q$, the $E^2_{0,q}$-term is isomorphic to $M \otimes_R N/yN$ when $q$ is odd, and isomorphic to $M \otimes_R \Ann_N (y)$ when $q$ is even. Since $E^2_{p,q} =0$ for all $q \in \mathbb{Z}$ and $1 \le p \le n$, there is an isomorphism $E^{\infty}_{0,q} \cong E^2_{0,q}$ for $q \le n-1$, giving
$$
E^{\infty}_{0,q} \cong \left \{
\begin{array}{ll}
M \otimes_S N & \text{for } q=0  \\
M \otimes_R N/yN & \text{for } 0<q<n \text{ and $q$ odd} \\
M \otimes_R \Ann_N (y) & \text{for } 0<q<n \text{ and $q$ even}.
\end{array} \right.
$$
But it follows from above that the equalities
$$
E^{\infty}_{0,q} \cong \Phi^0 H_q = \Phi^1 H_q = \cdots = \Phi^q H_q
$$
hold when $q<n$. Therefore, since $\Phi^q H_q = \Tor^S_q(M,N)$, we are done.
\end{proof}

As an immediate corollary we obtain the result, for the vanishing of homology, stated in the introduction.

\begin{corollary}\label{vanishinghomology}
Let $R=S/(x)$ where $S$ is a local ring and $(x,y)$ is a pair of exact zero-divisors in $S$. Furthermore, let $M$ and $N$ be $R$-modules such that $yN=0$. If there exists an integer $n \ge 2$ such that $\Tor^{R}_{i}(M,N)=0$ for $1 \le i \le n$, then $\Tor^{S}_{i}(M,N) \cong M \otimes_S N$ for $0 \le i \le n-1$.
Consequently, if $M$ and $N$ are nonzero and finitely generated, then $\Tor^{S}_{i}(M,N) \neq 0$ for $0 \le i \le n-1$.
\end{corollary}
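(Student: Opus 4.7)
The plan is to simply invoke Theorem \ref{mainvanishinghomology}; the assumption $yN=0$ collapses its hypotheses and conclusion into the single statement of the corollary. First I would observe that $yN=0$ forces $N/yN = N$ and $\Ann_N(y) = N$, so the three vanishing hypotheses of Theorem \ref{mainvanishinghomology},
\[
\Tor^R_i(M,N) \;=\; \Tor^R_i(M,N/yN) \;=\; \Tor^R_i(M,\Ann_N(y)) \;=\; 0 \qquad (1\le i\le n),
\]
reduce to the single assumption that $\Tor^R_i(M,N)=0$ for $1\le i\le n$, which is given.

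Next I would feed this into the conclusion of Theorem \ref{mainvanishinghomology}. With $N/yN$ and $\Ann_N(y)$ both equal to $N$, the odd/even cases of the statement become identical, so the formula collapses to $\Tor^S_i(M,N) \cong M\otimes_S N$ for all $0\le i\le n-1$. (The case $i=0$ is simply the fact that $\Tor_0=\otimes$.)

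Finally, for the ``consequently'' clause, I would appeal to Nakayama's lemma: if $M$ and $N$ are nonzero finitely generated modules over the local ring $S$, then $M\otimes_S N\neq 0$. Combined with the isomorphism just established, this gives $\Tor^S_i(M,N)\neq 0$ for $0\le i\le n-1$.

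There is essentially no obstacle here; the proof is a one-line specialization of Theorem \ref{mainvanishinghomology} together with the standard application of Nakayama for the nonvanishing. The only point worth flagging is to be explicit that under $yN=0$ the identity $\Ann_N(y)=N$ holds, so that the ``even'' case of the theorem's conclusion matches the ``odd'' case.
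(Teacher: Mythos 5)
Your proposal is correct and is exactly the argument the paper intends: the authors state this as ``an immediate corollary'' of Theorem \ref{mainvanishinghomology}, obtained precisely by noting that $yN=0$ gives $N/yN=N=\Ann_N(y)$, so both the hypotheses and the odd/even cases of the conclusion collapse, and the nonvanishing clause follows from Nakayama's lemma as you say.
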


Thus when the modules involved are finitely generated and nonzero, the corollary shows that the vanishing of homology over $R$ implies the \emph{non-vanishing} of homology over $S$.  This is in stark contrast to the case when $x$ is a regular element.

In certain cases we can show that the Tors over $S$ cannot vanish irrespective of vanishing of the Tors over $R$.

\begin{proposition} Let $R=S/(x)$ where $S$ is a local ring and $(x,y)$ is a pair of exact zero-divisors, both of which are \emph{minimal} generators of the maximal ideal of $S$. Furthermore, let $M$ and $N$ be nonzero finitely generated $R$-modules such that $yN=0$.  Then $\Tor^S_i(M,N)\ne 0$ for all $i\ge 0$.
\end{proposition}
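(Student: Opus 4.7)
My plan is to construct an explicit $S$-free resolution $T_\bullet$ of $M$ from the minimal $R$-free resolution $P_\bullet \to M$ (with $P_i = R^{b_i}$, $b_i = \beta_i^R(M)$) and the $2$-periodic minimal $S$-free resolution $G_\bullet : \cdots \to S \xrightarrow{y} S \xrightarrow{x} S \to R$ of $R$, and then isolate a distinguished ``corner'' summand of $T_n$ that will survive nontrivially after tensoring with $N$. Combining $P_\bullet$ and $G_\bullet$ into a Cartan--Eilenberg--Shamash type double complex $C_{i,j} = S^{b_i}$ ($i, j \geq 0$) --- with columns being copies of $G_\bullet$ (so the vertical differential $d^v$ is $\cdot x$ for $j$ odd and $\cdot y$ for $j$ even) and horizontal differentials given by $\m_S$-entry lifts $A_i$ of the $R$-differentials --- yields a total complex $T_n = \bigoplus_{i+j=n} S^{b_i}$. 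Since $\bar A_{i-1} \bar A_i = 0$ only holds in $R$, we have $A_{i-1} A_i \in (x)$ in $S$, and higher correction maps $A^{(k)} : C_{i,j} \to C_{i-k-1, j+k}$ must be adjoined to make the total differential satisfy $D^2 = 0$, producing an $S$-free resolution of $M$.

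The minimal-generator hypothesis enters precisely in arranging all corrections $A^{(k)}$ to have entries in $\m_S$. Writing $A_{i-1} A_i = xB$, the entries of $A_{i-1} A_i$ lie in $\m_S^2$ (as products of $\m_S$-entries), and $x \in \m_S \setminus \m_S^2$ then forces $B$ to have entries in $\m_S$. An iterative continuation of this ``divisibility by $x$ or $y$'' argument --- using that neither $x$ nor $y$ lies in $\m_S^2$ --- handles all higher $A^{(k)}$.

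Now tensor $T_\bullet$ with $N$: because $xN = yN = 0$, the vertical differentials of $T_\bullet \otimes_S N$ vanish, and all other differential components, having entries in $\m_S$, send elements into $\m_R N^\bullet$. I would then focus on the ``corner'' summand $N^{b_0}$ sitting at $(i, j) = (0, n)$ inside $T_n \otimes_S N = \bigoplus_{i+j=n} N^{b_i}$. All outgoing differentials from this summand vanish (they either leave the first quadrant or act as $\cdot x$ or $\cdot y$ on $N$), so $N^{b_0} \subseteq \ker D_n$. All incoming differentials --- from $(1, n)$ via $A_1$, from $(0, n+1)$ via the vertical, and from $(k+1, n-k)$ via $A^{(k)}$ for $1 \leq k \leq n$ --- have image in $\m_R N^{b_0}$. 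Projection to the corner summand followed by reduction modulo $\m_R$ therefore induces a well-defined surjection
\[
\Tor^S_n(M, N) = \ker D_n / \Im D_{n+1} \twoheadrightarrow N^{b_0}/\m_R N^{b_0} \cong (N/\m_R N)^{b_0}.
\]
The target is nonzero: Nakayama gives $N/\m_R N \neq 0$ (as $N$ is nonzero and finitely generated) and $b_0 = \beta_0^R(M) \geq 1$ (as $M$ is nonzero). Hence $\Tor^S_n(M, N) \neq 0$.

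The main technical hurdle is justifying that the Shamash--type correction maps $A^{(k)}$ can all be kept within $\m_S$; this is precisely where the assumption that both $x$ and $y$ are minimal generators of $\m_S$ is crucially used, since without it a correction could acquire a unit entry and allow the corner summand to be cancelled in passing to a minimal resolution.
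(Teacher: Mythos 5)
Your overall strategy---build a minimal $S$-free resolution of $M$ with underlying graded module $\bigoplus_{i+j=n}S^{b_i}$ and read off a surviving corner summand after tensoring with $N$---would indeed finish the proof if such a resolution existed, and your last step (the surjection $\Tor^S_n(M,N)\twoheadrightarrow (N/\m N)^{b_0}$) is correct as stated. The genuine gap is the existence of the resolution itself with all components of the differential having entries in $\m$. Because the $S$-free resolution of $R$ is infinite and periodic ($\cdots\xrightarrow{y}S\xrightarrow{x}S$), the system of higher corrections $A^{(k)}$ is infinite, and except on the bottom row the equation determining $A^{(k)}$ is not a divisibility statement of the form ``(matrix with entries in $\m^{2}$) $=x\cdot A^{(k)}$'': it is a homotopy equation $d^vA^{(k)}\pm A^{(k)}d^v=(\text{known term})$ in which, for $k$ odd, the two occurrences of $d^v$ are multiplication by \emph{different} elements ($x$ on one side, $y$ on the other, according to the parities of the rows involved). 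Your ``iterative divisibility'' remark does not address these equations, their solvability, the exactness of the resulting total complex, or the minimality of the chosen solutions. Indeed, granting your construction is tantamount to proving $\beta^S_n(M)=\sum_{i=0}^n\beta^R_i(M)$, which is the Henriques--{\c{S}}ega Poincar\'e series formula cited in Section 3 as a substantive theorem---considerably stronger than the proposition you are asked to prove. (The easy Cartan--Eilenberg construction does produce a genuine double complex resolving $M$, but its columns are non-minimal, and your corner summand could then be cancelled upon minimizing.)

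The paper's proof sidesteps all of this by going in the opposite direction: rather than assembling the $S$-resolution of $M$ from the $R$-resolution, it maps the periodic resolution of $R$ \emph{into} the minimal $S$-free resolution $F$ of $M$, lifting the map $S/(x)\to M$, $\bar 1\mapsto m$, for a minimal generator $m$ of $M$. The hypothesis that $x$ and $y$ are minimal generators of $\m$ is used only to check that each lift $f_i(1)$ stays outside $\m F_i$ (otherwise $xf_{i-1}(1)$ or $yf_{i-1}(1)$ would lie in $\m^2F_{i-1}$, forcing $x$ or $y$ into $\m^2$), so each $f_i$ is a split injection. Tensoring with $N$ then exhibits $f_i(1)\otimes n$ as a cycle that is a minimal generator of $F_i\otimes_SN$, hence not a boundary by minimality of $F$. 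If you want to salvage your approach, note that this single split-injective column is the only finite piece of your multicomplex that actually needs to exist; you do not need the whole resolution.
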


\begin{proof} Consider a minimal free resolution of $M$ over $S$:
\[
F : \cdots \to F_2 \xrightarrow{\partial_2} F_1 \xrightarrow{\partial_1} F_0 \to 0
\]
Letting $m$ denote a minimal generator of $M$, we can define the homomorphism $f:S/(x)\to M$ sending $\bar 1$ to $m$.  Because $x$ and $y$ are minimal generators of the maximal ideal of $S$, we can lift this homomorphism to a chain map
\[
\xymatrix{
\cdots \ar[r] & S\ar[r]^{y}\ar[d]^{f_2} & S\ar[r]^{x}\ar[d]^{f_1} & S\ar[r]\ar[d]^{f_0} & S/(x)\ar[r]\ar[d]^{f} & 0\\
\cdots\ar[r] & F_2\ar[r]^{\partial_2} & F_1\ar[r]^{\partial_1} & F_0\ar[r] & M\ar[r] & 0
}
\]
in such a way that each $f_i$ is a split injection. Tensoring the entire diagram with $N$ we get the commutative diagram
\[
\xymatrix{
\cdots \ar[r] & N\ar[r]^{0}\ar[d]^{f_2\otimes N} & N\ar[r]^{0}\ar[d]^{f_1\otimes N} & N\ar[r]^{=}\ar[d]^{f_0\otimes N} & N\ar[r]\ar[d]^{f\otimes N} & 0\\
\cdots\ar[r] & F_2\otimes_SN\ar[r]^{\partial_2\otimes N} & F_1\otimes_SN\ar[r]^{\partial_1\otimes N} & F_0\otimes_SN\ar[r] & M\otimes_SN\ar[r] & 0
}
\]
Now let $n$ be a minimal generator of $N$.  Then $f_i(1)\otimes_S n$ is a minimal generator of $F_i\otimes_SN$, and by commutativity, is in $\ker(\partial_i\otimes_S N)$ for all $i\geq 1$. This element is not a boundary, however, since $\partial_{i+1}\subseteq \m F_i$, and no element in the image of
$\partial_{i+1}\otimes_SN$ is a minimal generator of $F_i\otimes_SN$. It follows that  $\Tor^S_i(M,N)\ne 0$ for all $i\ge 0$.
\end{proof}

We next state the cohomological versions of Theorem \ref{mainvanishinghomology} and Corollary \ref{vanishinghomology}; they are proved dually.

\begin{theorem}\label{mainvanishingcohomology}
Let $R=S/(x)$ where $S$ is a local ring and $(x,y)$ is a pair of exact zero-divisors in $S$. Furthermore, let $M$ and $N$ be $R$-modules. If there exists an integer $n \ge 2$ such that
$\Ext_{R}^{i}(M,N)=\Ext_R^i(M,N/yN)=\Ext_R^i(M,\Ann_N (y))=0$ for $1 \le i \le n$, then
$$\Ext_{S}^{i}(M,N) \cong \left \{
\begin{array}{ll}
\Hom_S(M,N) & \text{for } i=0 \\
\Hom_S(M, \Ann_N (y)) & \text{for } 0<i<n \text{ and $i$ odd} \\
\Hom_S(M,N/yN) & \text{for } 0<i<n \text{ and $i$ even}
\end{array} \right.$$
\end{theorem}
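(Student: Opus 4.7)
The plan is to mirror the proof of Theorem \ref{mainvanishinghomology}, replacing the homological change of rings spectral sequence with its cohomological analogue. Specifically, I would begin from the first quadrant spectral sequence (see, e.g., Rotman)
\[
E_2^{p,q} = \Ext_R^p \bigl( M, \Ext_S^q(R, N) \bigr) \underset{p}{\Longrightarrow} \Ext_S^{p+q}(M,N).
\]
Using the identification $(\ddagger)$, the $E_2$-page becomes
\[
E_2^{p,q} \cong \left\{
\begin{array}{ll}
\Ext_R^p(M, N) & q = 0, \\
\Ext_R^p(M, \Ann_N(y)) & q > 0 \text{ odd}, \\
\Ext_R^p(M, N/yN) & q > 0 \text{ even},
\end{array}\right.
\]
and the three vanishing hypotheses amount precisely to $E_2^{p,q} = 0$ for $1 \le p \le n$ and every $q \ge 0$.

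Next I would analyze the zeroth column. For $0 \le q \le n-1$ and $r \ge 2$, the differential $d_r \colon E_r^{0,q} \to E_r^{r, q-r+1}$ lands in a subquotient of $E_2^{r, q-r+1}$. If $r > q+1$ the target lies outside the first quadrant and is therefore zero; otherwise $1 \le r \le q+1 \le n$, so the target vanishes by the assumption. Incoming differentials into $E_r^{0,q}$ come from $E_r^{-r, q+r-1}$, which is zero. Hence $E_\infty^{0,q} \cong E_2^{0,q}$ for all $q \le n-1$.

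I would then use the filtration of the abutment. For cohomology the convention is
\[
0 = \Phi^{q+1} H^q \subseteq \Phi^q H^q \subseteq \cdots \subseteq \Phi^0 H^q = H^q, \qquad E_\infty^{p, q-p} \cong \Phi^p H^q / \Phi^{p+1} H^q,
\]
where $H^q = \Ext_S^q(M,N)$. For $1 \le p \le q \le n-1$ we have $E_\infty^{p, q-p} = 0$ (being a subquotient of $E_2^{p,q-p}$), which telescopes to $\Phi^1 H^q = 0$. Combined with $H^q / \Phi^1 H^q \cong E_\infty^{0,q} \cong E_2^{0,q}$, this gives $\Ext_S^q(M,N) \cong E_2^{0,q}$ in the stated range. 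Finally, since $M$ and $N$ are $R$-modules and $R = S/(x)$, any $S$-linear map between them is automatically $R$-linear, so $\Ext_R^0(M, -) = \Hom_R(M, -) = \Hom_S(M, -)$ on these modules, yielding the three cases in the statement.

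The main subtlety I expect is simply keeping the cohomological indexing conventions straight: the filtration on $H^q$ is decreasing (rather than increasing as in the homological case), the top piece of interest is $E_\infty^{0,q}$ rather than $E_\infty^{q,0}$, and the differentials on $E_r^{0,q}$ go outward/upward. Once those are set correctly, the argument is formally dual to the one already given for $\Tor$.
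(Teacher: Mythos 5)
Your proof is correct and follows essentially the same route as the paper's: the paper proves this theorem dually to Theorem \ref{mainvanishinghomology}, using the change-of-rings spectral sequence $\Ext^p_R(M,\Ext^q_S(R,N)) \Rightarrow \Ext^{p+q}_S(M,N)$ together with $(\ddagger)$, the vanishing of columns $1$ through $n$, and the resulting identification of the abutment with the zeroth column. The only difference is notational (the paper indexes it as a third-quadrant homological spectral sequence with negative indices and an increasing filtration, whereas you use the standard first-quadrant cohomological indexing with a decreasing filtration), and your bookkeeping of the differentials and filtration is accurate.
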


\comments{\begin{proof}
Consider the third quadrant change of rings spectral sequence \cite[Theorem 10.75]{Rot}
$$
\Ext^p_R \left ( M, \Ext^q_S(R,N) \right ) \underset{p}{\Longrightarrow} \Ext^{p+q}_S(M,N)
$$

\[
\xymatrixcolsep{.5in}\xymatrix{
\cdots & E^2_{-2,0}  & E^2_{-1,0}  &  E^2_{0,0}\\
\cdots & E^2_{-2,-1}  & E^2_{-1,-1}  &  E^2_{0,-1} \ar[llu]_{d^2_{0,-1}}\\
\cdots & E^2_{-2,-2}  & E^2_{-1,-2}  &  E^2_{0,-2} \ar[llu]_{d^2_{0,-2}}\\
\iddots & \vdots & \vdots & \vdots
}
\]
From Lemma \ref{homologyringmodule}, the term $E^2_{-p,-q}$ is given by
$$
E^2_{-p,-q} \cong \left \{
\begin{array}{ll}
\Ext_R^p(M,N) & \text{for } q=0 \\
\Ext_R^p(M,\Ann_N (y) ) & \text{for } q>0 \text{ odd} \\
\Ext_R^p(M, N/yN) & \text{for } q>0 \text{ even}.
\end{array} \right.
$$

The vanishing assumptions imply that columns $-n$ through $-1$ of the $E^2$-page of this spectral sequence vanish, that is, $E^2_{p,q} =0$ for all $-n \le p \le -1$ and $q \in  \mathbb{Z}$. Thus $E^{\infty}_{p,q}$ also vanishes for such $p$ and $q$. Let $H_{-i}$ denote $\Ext_S^i(M,N)$ for all $i$. For each $i\ge 0$ we have a filtration $\{\Psi^{j}H_{-i}\}$ of $H_{-i}$ satisfying
$$0=\Psi^{-i-1}H_{-i}\subseteq\Psi^{-i}H_{-i}\subseteq\cdots \subseteq\Psi^{-1}H_{-i}\subseteq\Psi^0H_{-i}=H_{-i},$$ where $E^\infty_{j,-i-j}\cong\Psi^{j}H_{-i}/\Psi^{j-1}H_{-i}$ for all $i$ and $j$.  For  $-n \le p \le -1$ and all $q$, the vanishing of $E^\infty_{p,q}$ implies that $\Psi^{p} H_{p+q} = \Psi^{p-1} H_{p+q}$, that is, $\Psi^{p} H_{q} = \Psi^{p-1} H_{q}$ for all $q \in \mathbb{Z}$ and $-n \le p \le -1$.

Now consider the zeroth column of the $E^2$-page. For negative $q$, the $E^2_{0,q}$-term is isomorphic to $\Hom_R(M,\Ann_N (y))$ when $q$ is odd, and then isomorphic to $\Hom_R(M,N/yN)$ when $q$ is even. Since $E^2_{p,q} =0$ for all $q \in \mathbb{Z}$ and $-n \le p \le -1$, there is an isomorphism $E^{\infty}_{0,q} \cong E^2_{0,q}$ for $q \ge -n+1$, giving
$$
E^{\infty}_{0,q} \cong \left \{
\begin{array}{ll}
\Hom_R(M,\Ann_N (y)) & \text{for } -n<q<0 \text{ and $q$ odd} \\
\Hom_R(M,N/yN) & \text{for } -n<q<0 \text{ and $q$ even}.
\end{array} \right.
$$
But it follows from above that the equalities
$$
\Psi^{-1} H_q = \Psi^{-2} H_q = \cdots = \Psi^{-n-1} H_q=0
$$
hold when $q\ge-n$. Thus $E^\infty_{0,q}\cong\Psi^0 H_q =H_q= \Ext_S^{-q}(M,N)$ for $-n<q<0$ and the result follows.
\end{proof}}

\begin{corollary}
Let $R=S/(x)$ where $S$ is a local ring and $(x,y)$ is a pair of exact zero-divisors in $S$. Furthermore, let $M$ and $N$ be $R$-modules with $yN=0$. If there exists an integer $n \ge 2$ such that
$\Ext_{R}^{i}(M,N)=0$ for $1 \le i \le n$, then
$\Ext_{S}^{i}(M,N) \cong \Hom_S(M,N)$ for $0\le i<n$.  Consequently, if $N=M\ne 0$, then
$\Ext_S^i(M,M)\ne 0$ for $0<i<n$.
\end{corollary}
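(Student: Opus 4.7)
The proof is a direct application of Theorem \ref{mainvanishingcohomology} together with the observation that the hypothesis $yN=0$ collapses the three required vanishing conditions into the single one we are given. The plan is as follows.

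First I would note that $yN=0$ immediately gives $N/yN = N$ and $\Ann_N(y)=N$. Consequently, the three vanishing hypotheses in Theorem \ref{mainvanishingcohomology}, namely $\Ext_R^i(M,N)=\Ext_R^i(M,N/yN)=\Ext_R^i(M,\Ann_N(y))=0$ for $1\le i\le n$, reduce to the single condition $\Ext_R^i(M,N)=0$ for $1\le i\le n$, which is exactly the assumption. Applying the theorem, the conclusion in each of the three cases (the index $i=0$, odd $i$ in $0<i<n$, and even $i$ in $0<i<n$) is $\Hom_S(M,N)$, since both $\Hom_S(M,\Ann_N(y))$ and $\Hom_S(M,N/yN)$ coincide with $\Hom_S(M,N)$ under $yN=0$. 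This establishes the isomorphism $\Ext_S^i(M,N)\cong\Hom_S(M,N)$ for $0\le i<n$.

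For the consequence, I would specialize to $N=M\ne 0$ (noting that the hypothesis then reads $yM=0$). Since the identity map $\Id_M$ is a nonzero element of $\Hom_S(M,M)$, this module is nonzero, and hence $\Ext_S^i(M,M)\cong\Hom_S(M,M)\ne 0$ for $0<i<n$ by the isomorphism just established. There is no real obstacle here; the only thing to verify carefully is that the reductions $N/yN=N$ and $\Ann_N(y)=N$ are applied consistently on both sides of the isomorphism in Theorem \ref{mainvanishingcohomology}, so that the odd/even dichotomy of the conclusion disappears.
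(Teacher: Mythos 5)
Your proof is correct and is precisely the intended deduction: with $yN=0$ the three vanishing hypotheses of Theorem \ref{mainvanishingcohomology} collapse to the single given one, and the odd/even dichotomy in its conclusion disappears since $\Hom_S(M,\Ann_N(y))=\Hom_S(M,N/yN)=\Hom_S(M,N)$, after which nonvanishing for $N=M\neq 0$ follows from $\Id_M\in\Hom_S(M,M)$. The paper states the corollary without proof as an immediate consequence, and your argument matches the obvious route.
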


\section{Complexity}
As in the previous section, we fix a local ring $S$, a pair of exact zero-divisors $(x,y)$, and denote the local ring $S/(x)$ by $R$. In this section all modules are assumed to be finitely generated.
Our aim is to compare free resolutions of modules over $R$ with those over $S$ and determine relationships involving complexities.

Given a local ring $A$ and an $A$-module $M$, there exists a (deleted) free resolution of $M$
$$\cdots \to F_2 \to F_1 \to F_0  \to 0$$
which is minimal, that is, it appears as a direct summand of every free resolution of $M$. The cokernel of the map $F_{n+1} \to F_n$ is the $n$th \emph{syzygy} module of $M$ and denoted by $\Omega_A^n(M)$. Minimal free resolutions are unique up to isomorphisms and hence the syzygies are uniquely determined up to isomorphism. Moreover, for every nonnegative integer $n$, the $n$th \emph{Betti number} $\beta^A_n(M) \stackrel{\text{def}}{=} \rank F_n$ is a well-defined invariant of $M$. It is well-known that $\dim_k \Ext_A^n(M,k) = \beta^A_n(M) = \dim_k \Tor^A_n(M,k)$ for every integer $n$ where $k$ is the residue field of $A$.
It is also clear that the projective dimension of $M$ is finite if and only if the Betti numbers of $M$ eventually vanish. Thus the asymptotic behavior of the Betti sequence $\beta^A_0(M), \beta^A_1(M), \beta^A_2(M), \dots$ determines an important homological property of $M$. Following ideas from modular representation theory \cite{Alp}, an invariant measuring how ``fast" the Betti sequence grows was introduced by Avramov in \cite{Avramov1} (cf. also \cite{Avramov2}). The \emph{complexity} of $M$, denoted by $\cx_A(M)$, is defined as
$$\cx_A(M) \stackrel{\text{def}}{=} \inf \{ t \in \mathbb{N} \cup \{ 0 \} \mid \exists a \in \mathbb{R} \text{ such that } \beta^A_n(M) \le an^{t-1} \text{ for all } n \},$$
and measures the polynomial rate of growth of the Betti sequence of $M$. It follows from the definition that $M$ has finite projective dimension if and only if $\cx_A(M) =0$, whereas $\cx_A(M) =1$ if and only if the Betti sequence of $M$ is bounded. For an arbitrary local ring, the complexity of a module is not necessarily finite \cite[4.2.2]{Av3}. In fact, by \cite[Theorem 2.3]{Gulliksen2}, finiteness of complexity for all finitely generated $A$-modules is equivalent to $A$ being a complete intersection.

We now return to our previous setting of exact zero-divisors.  We first remark that every nonzero $R$-module has infinite projective dimension over $S$, that is, every such module has positive complexity over $S$.
Indeed, ($\dagger$) from the second paragraph of Section 2 shows that if $\Tor^S_i(R,M)=0$ for all $i\gg 0$, then $M/yM=0$.  Thus $M=0$ by Nakayama's Lemma.

Over a local ring $A$, the complexity of a module equals the complexity of any of its syzygies: their minimal free resolutions are the same except at the beginning. Moreover, given a short exact sequence
$$
0 \to M_1 \to M_2 \to M_3 \to 0
$$
of $A$-modules, the inequality
\begin{equation}\label{cxineq}
\cx_A(M_u) \le \max \{ \cx_A(M_v), \cx_A(M_w) \}
\tag{*}
\end{equation}
holds for $\{ u,v,w \} = \{ 1,2,3 \}$. This follows simply by comparing the $k$-vector space dimensions of the Tor modules in the long exact sequence
$$\cdots \to \Tor^A_n(M_1,k) \to \Tor^A_n(M_2,k) \to \Tor^A_n(M_3,k) \to \Tor^A_{n-1}(M_1,k) \to \cdots,$$
where $k$ is the residue field of $A$.

In the next proposition we use the inequality (\ref{cxineq}) and prove that if $M$ is an $R$-module with $\cx_S(M) \neq 1$, then $\cx_S(M) = \cx_S(\Omega_R^n(M))$ for all $n$. Here the assumption $\cx_S(M) \neq 1$ is necessary: the $S$-module $R$ has a minimal free resolution
$$\cdots \to S \xrightarrow{y} S \xrightarrow{x} S \xrightarrow{y} S \xrightarrow{x} S \to 0 $$ and hence has complexity one over $S$. However its syzygies $\Omega_R^n(R)$ are all zero for $n>0$.

\begin{proposition}\label{samecomplexity}
Let $R=S/(x)$ where $S$ is a local ring and $x$ is an exact zero-divisor in $S$. Then, for every finitely generated $R$-module $M$ with $\cx_S(M) \neq 1$, the equality $\cx_S(M) = \cx_S (\Omega_R^n(M))$ holds for all $n$.
\end{proposition}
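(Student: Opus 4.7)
The plan is to reduce to the case $n=1$ and then iterate. The key inputs will be: (i) the $S$-complexity of $R$ equals $1$, evident from the $2$-periodic resolution $\cdots \to S \xrightarrow{y} S \xrightarrow{x} S \to 0$ displayed earlier, whose Betti sequence is bounded but nonzero; (ii) the general inequality (\ref{cxineq}) for complexities in a short exact sequence; and (iii) the observation stated just before the proposition, that every nonzero $R$-module has $S$-complexity at least $1$.

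First I would apply (\ref{cxineq}) to the short exact sequence of $S$-modules
$$0 \to \Omega_R^1(M) \to F \to M \to 0,$$
where $F$ is a free cover of $M$ over $R$. Since $F$ is $R$-free, $\cx_S(F) = \cx_S(R) = 1$, and (\ref{cxineq}) in its two relevant directions yields
$$\cx_S(\Omega_R^1(M)) \le \max\{1,\cx_S(M)\} \quad \text{and} \quad \cx_S(M) \le \max\{1,\cx_S(\Omega_R^1(M))\}.$$
Combining the hypothesis $\cx_S(M)\ne 1$ with (iii), which forces $\cx_S(M)\ge 1$ for nonzero $M$, we have $\cx_S(M)\ge 2$ (the case $M=0$ being trivial). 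The first bound then immediately gives $\cx_S(\Omega_R^1(M)) \le \cx_S(M)$.

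For the reverse inequality I need to know that $\cx_S(\Omega_R^1(M)) \ge 1$, i.e.\ that $\Omega_R^1(M)\ne 0$; but if $M$ were $R$-free then $\cx_S(M)=\cx_S(R)=1$, contradicting the hypothesis, so $\Omega_R^1(M)\ne 0$ and (iii) applies. The second bound above then produces $\cx_S(M) \le \cx_S(\Omega_R^1(M))$, completing the case $n=1$. Because the resulting equality forces $\cx_S(\Omega_R^1(M)) \ne 1$ as well, the same argument applies verbatim with $M$ replaced by $\Omega_R^1(M)$, and induction on $n$ yields $\cx_S(M) = \cx_S(\Omega_R^n(M))$ for all $n\ge 0$. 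I do not anticipate any real obstacle; the only point needing care is the verification that $\Omega_R^1(M)$ is nonzero and has positive $S$-complexity, and both follow from the hypothesis $\cx_S(M)\ne 1$ together with the remark recorded in the text.
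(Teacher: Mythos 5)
Your proof is correct and follows essentially the same route as the paper's: handle $\cx_S(M)=0$ trivially, then apply the inequality (\ref{cxineq}) to the short exact sequence $0 \to \Omega_R^1(M) \to F \to M \to 0$ using $\cx_S(F)=\cx_S(R)=1$ and iterate. The only small extra step you take is invoking (iii) to establish $\cx_S(\Omega_R^1(M))\ge 1$; this is fine, though one can bypass it by noting directly that $\cx_S(M)\ge 2 \le \max\{1,\cx_S(\Omega_R^1(M))\}$ already forces $\cx_S(\Omega_R^1(M))\ge 2$.
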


\begin{proof}
If $\cx_{S}(M)=0$, then $M=0$ (see the third paragraph of Section 3.) Thus the result is trivial in this case. Next suppose $\cx_S(M)>1$. Consider the short exact sequence
$$0 \to \Omega_R^1(M) \to F \to M \to 0$$
where $F$ is a free $R$-module. Since the $S$-module $R$ has complexity one so does $F$.  Hence
the result follows from the inequality $(\ref{cxineq})$ and the short exact sequence considered above. \end{proof}

Next we will compare the Betti numbers and complexities of modules over $R$ with those over $S$. For that we first set some notations that generalize the notion of the Betti number and the complexity of a module.

Let $(A,\m)$ be a local ring with residue field $k$, and $M$ and $N$ be $A$-modules with the property that $M \otimes_A N$ has finite length. Then, for every nonnegative integer $n$, the length of $\Tor^A_n(M,N)$ is finite. We define this length to be the $n$th Betti number $\beta^A_n(M,N)$ of the pair $(M,N)$, that is, $\beta^A_n(M,N)\stackrel{\text{def}}{=} \ell\left(\Tor^A_n(M,N)\right)$. The \emph{length complexity of the pair} $(M,N)$, denoted by $\lcx_A (M,N)$, is then defined as:
$$\lcx_A (M,N) \stackrel{\text{def}}{=} \inf \{ t \in \mathbb{N} \cup \{ 0 \} \mid \exists a \in \mathbb{R} \text{ such that } \beta^A_n(M,N) \le an^{t-1} \text{ for all } n \},$$
cf. \cite[the discussion preceding Definition 2.1]{Dao2}.
Although letting $N=k$, we recover the Betti number and the ordinary complexity of $M$, that is, 
\[
\beta^{A}_{n}(M) = \beta^{A}_{n}(M,k) \text{ and } \cx_{A}(M) = \lcx_{A}(M,k), 
\]
our definition for $\lcx_{A}(M,N)$ of the pair $(M,N)$ is different than the one originally defined by Avramov and Buchweitz \cite{AvBu}, where the minimal number of generators of the cohomology modules $\Ext^n_A(M,N)$ is used.  In general there is no comparison between these two definitions of Betti numbers of the pair $(M,N)$; see also \cite[Theorem 5.4]{Dao2}.

\begin{theorem}\label{bettipair}
Let $R=S/(x)$ where $S$ is a local ring and $(x,y)$ is a pair of exact zero-divisors in $S$. Furthermore, let $M$ and $N$ be finitely generated $R$-modules such that $yN=0$ and $M \otimes_R N$ has finite length. Then, for all $n$,
\begin{equation}\label{bettieq}
\beta^R_n(M,N) - \sum_{i=0}^{n-2} \beta^R_i(M,N) \le \beta^S_n(M,N) \le \sum_{i=0}^n \beta^R_i(M,N)
\tag{\ref{bettipair}}
\end{equation}
\end{theorem}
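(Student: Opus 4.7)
The plan is to apply the same change of rings spectral sequence used in the proof of Theorem \ref{mainvanishinghomology}, namely
$$E^2_{p,q} = \Tor^R_p(M, \Tor^S_q(R,N)) \underset{p}{\Longrightarrow} \Tor^S_{p+q}(M,N),$$
and exploit the hypothesis $yN=0$ in much the same way. Since $yN=0$, formula $(\dagger)$ collapses to $\Tor^S_q(R,N)\cong N$ for \emph{all} $q\ge 0$, so the $E^2$-page takes the particularly simple form $E^2_{p,q}=\Tor^R_p(M,N)$, constant along each row. Next I would verify that all of these terms have finite length: the hypothesis that $M\otimes_R N$ has finite length forces $\mathrm{Supp}(M)\cap\mathrm{Supp}(N)\subseteq\{\mathfrak{m}\}$, so every $\Tor^R_p(M,N)$ is supported at $\mathfrak{m}$ and hence is a finite-length $R$-module. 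This makes $\beta^R_p(M,N)$ finite and, crucially, makes length additive on the filtration of $\Tor^S_n(M,N)$ coming from the spectral sequence, giving
$$\beta^S_n(M,N)=\ell\bigl(\Tor^S_n(M,N)\bigr)=\sum_{p+q=n}\ell(E^\infty_{p,q}).$$

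The upper bound is then immediate: each $E^\infty_{p,q}$ is a subquotient of $E^2_{p,q}=\Tor^R_p(M,N)$, so $\ell(E^\infty_{p,q})\le\beta^R_p(M,N)$, and summing over $p+q=n$ with $0\le p\le n$ yields $\beta^S_n(M,N)\le\sum_{i=0}^n\beta^R_i(M,N)$.

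For the lower bound I would track only the bottom-row term $E^r_{n,0}$. Because the spectral sequence lives in the first quadrant and $d^r$ has bidegree $(-r,r-1)$, no differential can land in $E^r_{n,0}$ on any page $r\ge 2$; thus $E^{r+1}_{n,0}=\ker\bigl(d^r\colon E^r_{n,0}\to E^r_{n-r,r-1}\bigr)$, and consequently
$$\ell(E^\infty_{n,0})=\ell(E^2_{n,0})-\sum_{r=2}^{n}\ell\bigl(\mathrm{im}\,d^r\bigr).$$
The image of $d^r$ is a submodule of $E^r_{n-r,r-1}$, which in turn is a subquotient of $E^2_{n-r,r-1}=\Tor^R_{n-r}(M,N)$, so its length is at most $\beta^R_{n-r}(M,N)$. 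Re-indexing $i=n-r$ gives
$$\beta^S_n(M,N)\ge\ell(E^\infty_{n,0})\ge\beta^R_n(M,N)-\sum_{i=0}^{n-2}\beta^R_i(M,N),$$
which is exactly the desired inequality.

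The main technical obstacle is the bookkeeping for the differentials on successive pages in establishing the lower bound; everything else follows smoothly from the collapse $\Tor^S_q(R,N)\cong N$ provided by $yN=0$, together with the finite-length support argument.
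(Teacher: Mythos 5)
Your proposal is correct and follows essentially the same route as the paper: the change-of-rings spectral sequence with $E^2_{p,q}=\Tor^R_p(M,N)$ (using $yN=0$), the filtration of $\Tor^S_n(M,N)$ for the upper bound, and tracking the bottom-row term $E^r_{n,0}$ (no differentials enter, so $E^{r+1}_{n,0}=\ker d^r_{n,0}$) for the lower bound. Your bookkeeping for the lower bound is in fact a bit cleaner than the paper's, which descends all the way to $\Ker d^1_{n,0}$ before invoking the subquotient relation with $E^2_{n,0}$, whereas you start directly from $E^2_{n,0}$; and your observation that $\ell(M\otimes_R N)<\infty$ forces all $\Tor^R_p(M,N)$ to have finite length is a useful point the paper leaves implicit.
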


\begin{remark}
We have used the convention that negative Betti numbers are zero.
\end{remark}

\begin{proof}
As in the proof of Theorem \ref{mainvanishinghomology}, we consider the first quadrant change of rings spectral sequence:
$$\Tor_p^R \left ( M, \Tor_q^S(R,N) \right ) \underset{p}{\Longrightarrow} \Tor_{p+q}^S(M,N).$$
Since $yN=0$, we see from ($\dagger$) that the $E^2$-page entries are given by $E^2_{p,q} = \Tor^R_p(M,N)$.

We first prove the left-hand inequality of \eqref{bettieq}. Fix an integer $n$ and consider the short exact sequence
$$0 \to \Phi^{n-1}H_n \to \Tor^S_n(M,N) \to E^{\infty}_{n,0} \to 0,$$
where $\Phi^iH_n$ is the filtration of $H_n$ from the proof of Theorem \ref{mainvanishinghomology}.
Since $E^{\infty}_{n,0} = \Ker d^n_{n,0}$, we obtain the inequality $\ell \left ( \Tor^S_n(M,N) \right ) \ge \ell ( \Ker d^n_{n,0} )$. Now for all $2 \le p \le n$, there is an exact sequence
$$0 \to \Ker d^p_{n,0} \to \Ker d^{p-1}_{n,0} \to \Im d^p_{n,0} \to 0,$$
which implies
\begin{eqnarray*}
\ell ( \Ker d^n_{n,0} ) & = & \ell ( \Ker d^{n-1}_{n,0} ) - \ell ( \Im d^n_{n,0} ) \\
& = & \ell ( \Ker d^{n-2}_{n,0} ) - \left ( \ell ( \Im d^{n-1}_{n,0} ) + \ell ( \Im d^n_{n,0} ) \right ) \\
& \vdots & \\
& = & \ell ( \Ker d^1_{n,0} ) - \sum_{i=2}^n \ell ( \Im d^i_{n,0} ).
\end{eqnarray*}
For $2 \le i \le n$, the image of $d^i_{n,0}$ is a submodule of $E^i_{n-i,i-1}$, and the latter is a subquotient of $E^2_{n-i,i-1}$. Then since $E^2_{n-i,i-1} = \Tor^R_{n-i}(M,N)$, there is an inequality $\ell ( \Im d^i_{n,0} ) \le \ell \left ( \Tor^R_{n-i}(M,N) \right )$. Moreover the module $E^2_{n,0}$ is a subquotient of $\Ker d^1_{n,0}$. Thus, since $E^2_{n,0} = \Tor^R_{n}(M,N)$, we have $\ell ( \Ker d^1_{n,0} ) \ge \ell \left ( \Tor^R_{n}(M,N) \right )$. This gives
\begin{eqnarray*}
\ell \left ( \Tor^S_n(M,N) \right ) & \ge &  \ell ( \Ker d^n_{n,0} ) \\
& = & \ell ( \Ker d^1_{n,0} ) - \sum_{i=2}^n \ell ( \Im d^i_{n,0} ) \\
& \ge & \ell \left ( \Tor^R_{n}(M,N) \right ) - \sum_{i=0}^{n-2} \ell \left ( \Tor^R_{i}(M,N) \right ),
\end{eqnarray*}
proving the left-hand inequality.

For the right-hand inequality, we fix an integer $n$ and consider the short exact sequence:
$$
0 \to \Phi^{p-1}H_n \to \Phi^{p}H_n \to E^{\infty}_{p,n-p} \to 0
$$
for $0 \le p \le n$. Counting the lengths, we obtain equalities
\begin{eqnarray*}
\ell \left ( \Phi^{n}H_n \right ) & = & \ell \left ( \Phi^{n-1}H_n \right ) + \ell \left ( E^{\infty}_{n,0} \right ) \\
& = & \ell \left ( \Phi^{n-2}H_n \right ) + \ell \left ( E^{\infty}_{n-1,1} \right ) + \ell \left ( E^{\infty}_{n,0} \right ) \\
& \vdots &\\
& = & \sum_{i=0}^n \ell \left ( E^{\infty}_{i,n-i} \right ).
\end{eqnarray*}
Each $E^{\infty}_{i,n-i}$ is a subquotient of $E^{2}_{i,n-i}$, and so since $E^2_{i,n-i} = \Tor^R_{i}(M,N)$, we obtain the inequality $\ell ( E^{\infty}_{i,n-i} ) \le \ell \left ( \Tor^R_{i}(M,N) \right )$. Then since $\Phi^{n}H_n = \Tor^S_{n}(M,N)$, we obtain
\begin{eqnarray*}
\ell \left ( \Tor^S_{n}(M,N) \right ) & = & \ell \left ( \Phi^{n}H_n \right ) \\
& = & \sum_{i=0}^n \ell \left ( E^{\infty}_{i,n-i} \right ) \\
& \le & \sum_{i=0}^n \ell \left ( \Tor^R_{i}(M,N) \right ),
\end{eqnarray*}
proving the right-hand inequality.
\end{proof}

As a consequence, using the right-hand side of the inequality \eqref{bettieq}, we obtain an upper bound for $\lcx_S(M,N)$ in terms of the complexity of $(M,N)$ over $R$.

\begin{corollary}\label{complexitypair}
Let $R=S/(x)$ where $S$ is a local ring and $(x,y)$ is a pair of exact zero-divisors in $S$. Furthermore, let $M$ and $N$ be finitely generated $R$-modules such that $yN=0$ and $M \otimes_R N$ has finite length. Then $\lcx_S(M,N) \le \lcx_R(M,N)+1$.
\end{corollary}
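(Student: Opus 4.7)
The plan is to derive the inequality directly from the right-hand side of \eqref{bettieq} in Theorem \ref{bettipair}, which gives $\beta^S_n(M,N) \le \sum_{i=0}^n \beta^R_i(M,N)$ for every $n$. The argument is a routine translation of a polynomial bound on the $\beta^R_i(M,N)$ into a polynomial bound on the partial sums.

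Set $t=\lcx_R(M,N)$. The case $t=0$ is handled separately: by definition there exists $a\in\mathbb{R}$ with $\beta^R_i(M,N)\le ai^{-1}$ for all $i$, which forces $\beta^R_i(M,N)=0$ for $i\gg 0$, and so $\sum_{i=0}^n\beta^R_i(M,N)$ is eventually constant in $n$; Theorem \ref{bettipair} then shows that $\beta^S_n(M,N)$ is bounded, which gives $\lcx_S(M,N)\le 1 = t+1$.

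Now suppose $t\ge 1$. By definition of $t$ there exists $a\in\mathbb{R}$ such that $\beta^R_i(M,N)\le ai^{t-1}$ for all $i\ge 1$. Enlarging $a$ if necessary, we may assume also $\beta^R_0(M,N)\le a$. Then for all $n\ge 1$,
\[
\sum_{i=0}^n \beta^R_i(M,N) \le a + a\sum_{i=1}^n i^{t-1} \le a + an\cdot n^{t-1} = a + an^{t}\le 2an^{t}.
\]
Combining with the right-hand inequality of \eqref{bettieq} yields $\beta^S_n(M,N)\le 2an^{t}$ for all $n\ge 1$, so $\lcx_S(M,N)\le t+1=\lcx_R(M,N)+1$, as desired.

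The only point that requires any care is keeping track of the $t=0$ case separately, since the formula $ai^{t-1}$ behaves degenerately there; otherwise the estimate is immediate from the bound supplied by Theorem \ref{bettipair}.
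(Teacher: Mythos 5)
Your proof is correct and follows essentially the same route as the paper's: apply the right-hand inequality of \eqref{bettieq} and convert the degree-$(t-1)$ polynomial bound on the $\beta^R_i(M,N)$ into a degree-$t$ bound on the partial sums (the paper simply estimates $\sum_{i=0}^n ai^{c-1}\le (n+1)an^{c-1}$, and disposes of the case $\lcx_R(M,N)=\infty$ in one sentence, which you should also mention explicitly). Your separate treatment of $t=0$ is a minor extra care the paper elides, but the argument is the same.
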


\begin{proof}
If $\lcx_R(M,N)=\infty$, then there is nothing to prove. So suppose $\lcx_R(M,N)= c<\infty$. Then, by the definition, there exists a real number $a$ such that $\beta^R_n(M,N) \le an^{c-1}$ for all $n$. By Theorem \ref{bettipair}, the inequality
$$\beta^S_n(M,N) \le \sum_{i=0}^n \beta^R_i(M,N) \le \sum_{i=0}^n ai^{c-1} \le (n+1)an^{c-1}$$
holds for all $n$. Therefore there is a real number $b$ such that $\beta^S_n(M,N) \le bn^{c}$ for all $n$. This shows that $\lcx_S(M,N) \le c+1$.
\end{proof}

We are unaware of an example of a pair of $R$-modules for which equality holds in the left-hand side of \eqref{bettieq}. On the other hand, equality may occur in the right-hand side. Indeed, when the exact zero-divisors $x$ and $y$ are minimal generators of the maximal ideal of $S$, Henriques and {\c{S}}ega prove \cite[1.7]{HenriquesSega} that the equality
$$
\sum_{n=0}^{\infty} \beta^S_n(M) t^n = \frac{1}{1-t} \sum_{n=0}^{\infty} \beta^R_n(M) t^n
$$
of Poincar{\'e} series holds for every finitely generated $R$-module $M$. This gives:
$$
\beta^S_n(M) = \sum_{i=0}^n \beta^R_i(M)
$$
However, when $x$ and $y$ are arbitrary, the equality of the Poincar{\'e} series stated above may fail:
\begin{example} Let $S=k[[x]]/(x^{3})$ where $k$ is a field. Then $x^{2}$ is an exact zero divisor in $S$. Set $R=S/(x^{2})\cong k[[x]]/(x^{2})$.
It can be seen that:
$$
\sum_{n=0}^{\infty} \beta^S_n(k) t^n = \frac{1}{1-t}= \sum_{n=0}^{\infty} \beta^R_n(k) t^n
$$
\end{example}
\noindent This example also shows that the inequality of Corollary \ref{complexitypair} can be strict.

We  now give an example illustrating the fact that the left-hand inequality of \eqref{bettieq}
does give useful lower bounds in some cases:

\begin{example}\label{lhbound} Let $R=k[x_1\dots,x_e]/(x_1,\dots,x_e)^2$, and $M$ be a
finitely generated $R$-module.  Then
$\Omega_R^1(M)$ is a finite dimensional vector space over $k$ of dimension $\beta_1^R(M)$.
It is easy to see that the Betti numbers of $k$ are $\beta_n^R(k)=e^n$.  It follows that
$\beta_n^R(M)=\beta_1^R(M)e^{n-1}$ for all $n\ge 1$. From the left-hand inequality of \eqref{bettieq}
we have
\begin{eqnarray*}
\beta^S_n(M) & \ge & \beta_n^R(M)-\sum_{i=0}^{n-2}\beta_i^R(M) \\
& = & \beta_1^R(M)e^{n-1}-\left(\sum_{i=1}^{n-2}\beta_1^R(M)e^{i-1}\right)-\beta_0^R(M)\\
& = & \beta_1^R(M)\left(e^{n-1}-\frac{e^{n-2}-1}{e-1}\right)-\beta_0^R(M)\\
& = & \beta_1^R(M)\left(\frac{e^{n}-e^{n-1}-e^{n-2}+1}{e-1}\right)-\beta_0^R(M)\\
& \ge & \frac{\beta_1^R(M)}{2}e^{n-1}-\beta_0^R(M)
\end{eqnarray*}
for $e\ge 2$ and for any ring $S$ such that there exists an exact zero-divisor $x$ with $R\cong S/(x)$.
Note that the last inequality follows since for $e\ge 2$ we have $e^2-e-2\ge 0$.  Then $e^{n-2}(e^2-e-2)\ge 0$, which implies that $e^n-e^{n-1}-2e^{n-2}+2\ge 0$.  Thus $2(e^n-e^{n-1}-e^{n-2}+1)\ge e^{n-1}(e-1)$, and the desired inequality follows. In particular, $R$-modules must have exponential growth over $S$ as well.
As a specific example, let $S=k[x,y,z]/(x^2,y^2,z^2,yz)$.  Then $x$ is an exact zero-divisor in $S$,
and $R=S/(x)\cong k[y,z]/(y,z)^2$ has the form above.
\end{example}

When $N=k$, the assumptions that $M \otimes N$ has finite length and $yN=0$ hold automatically. Therefore, in this situation, Theorem \ref{bettipair} and Corollary \ref{complexitypair} can be summarized as follows:

\begin{corollary}\label{complexitymodule}
Let $R=S/(x)$ where $S$ is a local ring and $x$ is an exact zero-divisor in $S$. Then, for every finitely generated $R$-module $M$, the inequalities
$$\beta^R_n(M) - \sum_{i=0}^{n-2} \beta^R_i(M) \le \beta^S_n(M) \le \sum_{i=0}^n \beta^R_i(M)$$
hold for all $n$. Consequently $\cx_S(M) \le \cx_R(M)+1$ holds.
\end{corollary}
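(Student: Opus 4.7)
The plan is to deduce this corollary directly from Theorem \ref{bettipair} and Corollary \ref{complexitypair} by specializing to $N = k$, the common residue field of $R$ and $S$. First I would invoke the definition of exact zero-divisor to produce an element $y\in S$ such that $(x,y)$ is a pair of exact zero-divisors; this brings us into the hypothesis setting of Theorem \ref{bettipair}.

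Next I would verify the two technical hypotheses of Theorem \ref{bettipair} when $N=k$. Since $y$ lies in the maximal ideal of $S$, its image in $R$ lies in the maximal ideal of $R$, so $yk = 0$. Since $M$ is finitely generated, $M\otimes_R k \cong M/\m_R M$ is a finite-dimensional $k$-vector space and in particular has finite length. I would then identify the length-based Betti numbers with the usual Betti numbers: for $A\in\{R,S\}$, the module $\Tor_n^A(M,k)$ is a finite-dimensional $k$-vector space, so
\[
\beta_n^A(M,k) = \ell\bigl(\Tor_n^A(M,k)\bigr) = \dim_k \Tor_n^A(M,k) = \beta_n^A(M).
\]
Similarly, $\lcx_A(M,k) = \cx_A(M)$ by definition. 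Plugging these identifications into the conclusion of Theorem \ref{bettipair} yields the displayed Betti number inequalities, and substituting into Corollary \ref{complexitypair} yields $\cx_S(M) \le \cx_R(M)+1$.

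There is no real obstacle here; the corollary is essentially a direct specialization of the previously established results, and the only things to check are the two hypotheses (both trivial for $N=k$) and the identification of $\beta_n^A(M,k)$ and $\lcx_A(M,k)$ with the standard Betti number and complexity of $M$, which follow immediately from the definitions given in Section 3.
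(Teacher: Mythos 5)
Your proposal is correct and matches the paper's treatment exactly: the paper simply observes in the sentence preceding the corollary that for $N=k$ the hypotheses $yN=0$ and $\ell(M\otimes_R N)<\infty$ hold automatically, and that $\beta^A_n(M,k)=\beta^A_n(M)$ and $\lcx_A(M,k)=\cx_A(M)$, so the result is a direct specialization of Theorem \ref{bettipair} and Corollary \ref{complexitypair}. Your verification of these points (including that $y$ is a non-unit) is exactly the intended argument.
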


\begin{remark} It follows from \cite[4.4]{AvramovHenriquesSega} that $R$ is a complete intersection if and only if $S$ is a complete intersection. The complexity inequality obtained in Corollary \ref{complexitymodule} gives a different proof for the `only if' direction this result: if $\cx_{R}(k)<\infty$, where $k$ is the residue field of $R$, then it follows from Corollary \ref{complexitymodule} that $\cx_{S}(k)<\infty$ and hence, by \cite[2.5]{Gulliksen2}, $S$ is a complete intersection.

Another observation related to the result stated above concerns commutative local Cohen-Macaulay Golod rings \cite[5.2]{Av3}. Assume $S$ is such a ring. Since a finitely generated module has infinite complexity over $S$ in case it has infinite projective dimension over $S$ and $\text{codepth}(S) \geq 2$ \cite[5.3.3(2)]{Av3}, we conclude $\text{codepth}(S) \leq 1$ (Recall $\cx_{S}(R)=1$). Moreover, as $x$ is not regular, $\text{codepth}(S)=1$. This implies that $S$ is a hypersurface and hence $R$ is a complete intersection.
\end{remark}

As discussed in the introduction, when $x$ is regular the complexity inequality is quite different than the one obtained in Corollary \ref{complexitymodule}. More precisely, in that case the inequalities $\cx_S(M) \le \cx_R(M) \le \cx_S(M)+1$ hold. In particular the complexity of $M$ over $R$ is finite if and only if it is finite over $S$. However, in our situation, when $x$ is an exact zero-divisor, we are unable to deduce any further inequalities, such as $\cx_R(M) \le \cx_S(M)$, from Theorem \ref{bettipair}. In fact we do not know whether there exists an $R$-module $M$ with $\cx_S(M)<\infty$ and $\cx_R(M)=\infty$. We record this in the next question.

\begin{question} Let $R=S/(x)$ where $S$ is a local ring and $x$ is an exact zero-divisor in $S$. Is $\cx_R(M) \le \cx_S(M)$ for all finitely generated $R$-modules $M$?
\end{question}

\section{Canonical elements of $\Ext^2_R(M,M)$ and Lifting}

In this section we restrict our attention to the case where $(x,x)$ is a pair of exact zero-divisors in
the local ring $S$, and $R=S/(x)$. We discuss natural chain endomorphisms of complexes over $R$, following the construction in \cite[Section 1]{Eisenbud}, and show that whether or not they are null-homotopic  dictates the liftability of $R$-modules to $S$.  These results generalize classical results (see, for example \cite{ADS}) for lifting modules modulo a regular element to modulo the square of the regular element.

\subsection*{Canonical endomorphisms of complexes} Let
\begin{equation}\label{F}
F:\cdots \to F_{i+1}\xrightarrow{\partial_{i+1}} F_i \xrightarrow{\partial_i} F_{i-1} \to\cdots
\end{equation}
be a complex of finitely generated free $R$-modules.  We let
\begin{equation}\label{preimage}
\widetilde F:\cdots \to\widetilde F_{i+1}\xrightarrow{\widetilde\partial_{i+1}} \widetilde F_i \xrightarrow{\widetilde\partial_i} \widetilde F_{i-1} \to\cdots
\end{equation}
denote a preimage over $S$ of the complex $F$, that is, a sequence of homomorphisms
$\widetilde\partial_i:\widetilde F_i \to \widetilde F_{i-1}$ of free $S$-modules
such that $F$ and $\widetilde F \otimes_S R$ are isomorphic $R$-complexes.  From the fact that $\widetilde\partial_{i-1}\widetilde\partial_i(\widetilde F_i)\subseteq x \widetilde F_{i-2}$ for all $i$,
we can write
\begin{equation}\label{factorization}
\widetilde\partial_{i-1}\widetilde\partial_i = x\widetilde s_i
\end{equation}
for some homomorphism $\widetilde s_i:\widetilde F_i \to \widetilde F_{i-2}$.
Now we define the homomorphisms $s_i:F_i \to F_{i-2}$ by
\begin{equation}\label{s}
s_i=\widetilde s_i \otimes_S R
\end{equation}
for all $i$.

There are several properties of the $s_i$ which we should like to mention.  See \cite[Section 1]{Eisenbud}
for the proofs. (Note that in our case $(x)/(x)^2=(x)\cong S/(x)$ is a free $S/(x)$-module.)

\begin{enumerate}
\item[(a)] The definition of $s_i$ is independent of the factorization in (\ref{factorization}).
\comments{\begin{proof} Suppose that $\widetilde\partial_{i-1}\widetilde\partial_i = x\widetilde t_i$
is another factorization.  Then $x(\widetilde s_i- \widetilde t_i)=0$.  Since $\Ann_S(x)=(x)$
it follows that $\widetilde s_i - \widetilde t_i = xu_i$ for some $u_i:\widetilde F_i \to \widetilde F_{i-2}$.  Thus, modulo $x$, we have $s_i=t_i$.
\end{proof}}
\item[(b)] The family $s=\{s_i\}$ is a chain endomorphism of $F$ of degree $-2$.
\comments{\begin{proof} For each $i$ we have
\[
x(\widetilde s_{i-1}\widetilde\partial_i-\widetilde\partial_{i-2}\widetilde s_i)=
\widetilde\partial_{i-2}\widetilde\partial_{i-1}\widetilde\partial_i-
\widetilde\partial_{i-2}\widetilde\partial_{i-1}\widetilde\partial_i=0.
\]
It follows that $\widetilde s_{i-1}\widetilde\partial_i-\widetilde\partial_{i-2}\widetilde s_i=xu_i$
for some $u_i:\widetilde F_i \to \widetilde F_{i-3}$.  Hence
$s_{i-1}\partial_i=\partial_{i-2}s_i$.
\end{proof}}
\item[(c)] \label{naturality} Let
\[
G: \cdots \to G_{i+1}\xrightarrow{\delta_{i+1}} G_i \xrightarrow{\delta_i} G_{i-1} \to \cdots
\]
be another complex of finitely generated free $R$-modules, and assume that there exists a chain map $f:F \to G$.
Let $t=\{t_i=\widetilde t_i\otimes_S R:G_i\to G_{i-2}\}$ be the chain map defined by the factorizations
$\widetilde\delta_{i-1}\widetilde\delta_i=x\widetilde t_i$ for all $i$, where $\widetilde G$ is a preimage over $S$ of $G$. Then the chain maps $fs$ and $tf$ are homotopic.
\comments{\begin{proof} Assume that the degree of $f$ is $d$.  For each $i$ let
$\widetilde f_i:\widetilde F_i\to \widetilde G_{i+d}$ be a preimage of $f_i:F_i \to G_{i+d}$ over $S$.
Since $f_{i-1}\partial_i=\delta_{i+d}f_i$ for all $i$ there exist
$\widetilde h_i:\widetilde F_i\to \widetilde G_{i+d-1}$ such that
$\widetilde f_{i-1}\widetilde\partial_i-\widetilde\delta_{i+d}\widetilde f_i=x\widetilde h_i$. Now for all $i$ we have
\begin{eqnarray*}
x(\widetilde f _{i-2}\widetilde s_i-\widetilde t_{i+d}\widetilde f_i) & = &
\widetilde f _{i-2}\widetilde\partial_{i-1}\widetilde\partial_i-
\widetilde\delta_{i+d-1}\widetilde\delta_{i+d}\widetilde f_i\\
& = & (\widetilde\delta_{i+d-1}\widetilde f_{i-1}+x\widetilde h_{i-1})\widetilde\partial_i
-\widetilde\delta_{i+d-1}(\widetilde f_{i-1}\widetilde\partial_i-x\widetilde h_i)\\
& = & x(\widetilde h_{i-1}\widetilde\partial_i+\widetilde\delta_{i+d-1}\widetilde h_i).
\end{eqnarray*}
It follows that there exists for each $i$ homomorphisms $u_i: \widetilde F_i \to \widetilde G_{i+d-2}$
such that
\[
(\widetilde f _{i-2}\widetilde s_i-\widetilde t_{i+d}\widetilde f_i)-
(\widetilde h_{i-1}\widetilde\partial_i+\widetilde\delta_{i+d-1}\widetilde h_i)=xu_i.
\]
Thus for all $i$ we have
\[
f_{i-2}s_i-t_{i+d}f_i=h_{i-1}\partial_i+\delta_{i+d-1}h_i
\]
where $h_i=\widetilde h_i\otimes_S R$ for all $i$.  This shows that $fs$ and $tf$ are homotopic.
\end{proof}}
\item[(d)] From (c) it follows that the definition of the $s_i$ is independent, up to homotopy, of the preimage $\widetilde F$ of $F$ chosen in (\ref{preimage}).
\end{enumerate}

\subsection*{The group $\Ext^n_A(M,M)$}  Let $A$ be an associative ring, and $M$ an $A$-module.
Suppose that $F$ is a projective resolution of $M$.
Then $\Ho^n(\Hom_A(F,F))$ is the group of homotopy equivalence classes of chain endomorphisms of $F$ of degree $n$.  For a chain endomorphism $s$ of $F$ of degree $n$, we let $[s]$ denote the class of $s$ in $\Ho^n(\Hom_A(F,F))$. Let $G$ be another projective resolution of $M$ over $A$.  Then the comparison maps $f:F \to G$ and $g:G\to F$ lifting the identity map on $M$ are homotopically equivalent. That is, $fg$ is homotopic to the identity map on $G$ and $gf$ is homotopic to the identity map on $F$.  It follows that the
map
\begin{equation}\label{iso}
\theta^F_G:\Ho^n(\Hom_A(F,F)) \to \Ho^n(\Hom_A(G,G))
\end{equation}
given by $[s]\mapsto [fsg]$ is an isomorphism, with
inverse $\theta^G_F:[s]\mapsto [gsf]$.  It is well-known that this group is $\Ext^n_A(M,M)$ (see, for example \cite{AV}.)

\subsection*{Canonical elements of $\Ext^2_R(M,M)$}
Returning to the situation where $R=S/(x)$ for the pair $(x,x)$ of exact zero-divisors, let $F$ be a free resolution of $M$ over $R$, and $s$ be the endomorphism of $F$ defined by (\ref{s}).  Thus we have the element $[s]\in\Ho^2(\Hom_R(F,F))$.  That we call $[s]$ a canonical element of $\Ext^2_R(M,M)$ is reinforced by the following lemma.

\begin{lemma}\label{canonicallemma}
Let $R=S/(x)$ where $S$ is a local ring and $(x,x)$ is a pair of
exact zero-divisors in $S$.  Suppose that $F$ and $G$ are free resolutions of a finitely generated module $M$ over $R$, that $s$ is the canonical endomorphism of $F$ as defined in (\ref{s}), and that $t$ is the canonical endomorphism of $G$ as defined in (\ref{s}).  Then we have \[
\theta^G_F([t])=[s]
 \]
where  $\theta^G_F$ is the isomorphism defined in (\ref{iso}).
\end{lemma}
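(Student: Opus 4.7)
The plan is to reduce the identity $\theta^G_F([t])=[s]$ to property (c) of the canonical endomorphism construction. First, by the standard comparison theorem for projective resolutions, choose chain maps $f\colon F\to G$ and $g\colon G\to F$ lifting $\Id_M$; they satisfy $fg\simeq\Id_G$ and $gf\simeq\Id_F$. Unpacking the definition in (\ref{iso}) gives $\theta^G_F([t])=[gtf]$, so the task reduces to showing that $gtf$ and $s$ are homotopic as chain endomorphisms of $F$ of degree $-2$.

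Next, I would apply property (c) to the chain map $g\colon G\to F$, with $t$ playing the role of the canonical endomorphism of the source complex (as in the statement of (c)) and $s$ that of the target complex. The conclusion of (c) then reads $gt\simeq sg$ as chain maps $G\to F$. Composing with $f$ on the right gives $gtf\simeq sgf$, and since $gf\simeq\Id_F$ we obtain $sgf\simeq s$. Chaining the two homotopies yields $gtf\simeq s$, hence $[gtf]=[s]$ in $\Ho^2(\Hom_R(F,F))$, as desired.

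The only delicate aspect is matching the variables in (c) correctly, since (c) is stated for a chain map $f\colon F\to G$ while we are applying it in the reverse direction via $g\colon G\to F$; once the roles of source/target and of $s/t$ are swapped, property (c) does all the work. Beyond this bookkeeping, the lemma is a formal consequence of (c) together with the fact that comparison maps between projective resolutions of the same module are mutual homotopy inverses, so no substantive obstacle arises.
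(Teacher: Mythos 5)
Your argument is correct, but it takes a genuinely different route from the paper's. You deduce the lemma formally from property (c) (naturality of the canonical endomorphism up to homotopy): applying (c) to the comparison map $g\colon G\to F$ gives $gt\simeq sg$, and then the standard facts that pre- and post-composition with a fixed chain map preserve homotopy classes yield $gtf\simeq sgf\simeq s\,(gf)\simeq s$, using $gf\simeq \Id_F$ and the chain-map identity $s\partial=\partial s$ to transport the homotopy. This is a clean two-line reduction, but it leans entirely on (c), whose proof the paper only cites (to Eisenbud) rather than reproduces. The paper instead gives a self-contained computation: it first assumes $F$ minimal, so that $f$ can be chosen to be a split injection with $gf=\Id_F$ exactly (and with liftings satisfying $\widetilde g\widetilde f=\Id_{\widetilde F}$ over $S$), then exhibits an explicit homotopy between $g_{i-2}t_if_i$ and $s_i$ by a direct calculation with the factorizations $\widetilde\partial_{i-1}\widetilde\partial_i=x\widetilde s_i$, and finally handles arbitrary $F$ by factoring $\theta^G_F=\theta^G_L\theta^L_F$ through a minimal resolution $L$. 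Your approach buys brevity and makes the conceptual content transparent (the lemma really is just naturality plus homotopy-invertibility of comparison maps); the paper's approach buys independence from the external reference and produces the homotopy explicitly. Both are valid proofs.
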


\begin{proof} First assume that $F$ is a minimal free resolution of $M$.  Then the comparison map
$f:F\to G$ lifting the identity map on $M$ can be chosen to be a split injection, with splitting
$g:G\to F$, also lifting the identity map on $M$.  In particular, we have $gf=\Id_F$, the identity
map on $F$.

Denote the differential on $F$ by $\partial$, and that on $G$ by $\delta$.  Let $(\widetilde F,\widetilde\partial)$ be a preimage over $S$ of $(F,\partial)$, and $(\widetilde G,\widetilde\delta)$ be a preimage
over $S$ of $(G,\delta)$.  We choose preimages $\widetilde f$ of $f$ and $\widetilde g$ of $g$ over $S$ such that $\widetilde g\widetilde f=\Id_{\widetilde F}$.

As $g_{i-1}\delta_i=\partial_ig_i$ for all $i$, there exists
$u_i:\widetilde G_i\to \widetilde F_{i-1}$ such that
$\widetilde g_{i-1}\widetilde\delta_i=\widetilde\partial_i\widetilde g_i+xu_i$ for all $i$.
Similarly, there exists $v_i:\widetilde F_i\to\widetilde G_{i-1}$ such that
$\widetilde\delta_i\widetilde f_i=\widetilde f_{i-1}\widetilde\partial_i+xv_i$ for all $i$.
Thus we have
\begin{eqnarray*}
x(\widetilde g _{i-2}\widetilde t_i\widetilde f_i-\widetilde s_i) & = &
\widetilde g _{i-2}\widetilde\delta_{i-1}\widetilde\delta_i\widetilde f_i-\widetilde\partial_{i-1}\widetilde\partial_i\\
& = & (\widetilde\partial_{i-1}\widetilde g_{i-1}+xu_{i-1})(\widetilde f_{i-1}\widetilde\partial_i+xv_i)-
\widetilde\partial_{i-1}\widetilde\partial_i\\
& = & x(\widetilde\partial_{i-1}\widetilde g_{i-1}v_i+u_{i-1}\widetilde f_{i-1}\widetilde\partial_i).
\end{eqnarray*}
It follows that $g_{i-2}t_if_i-s_i=\partial_{i-1}(g_{i-1}\overline v_i)+(\overline u_{i-1}f_{i-1})\partial_i$ for all $i$, where
$\overline u_i=u_i\otimes_SR$ and $\overline v_i=v_i\otimes_S R$.
We will have shown that $gtf$ is homotopic to $s$ with homotopy $h_i=\overline u_if_i$ once we know that $\overline u_if_i=g_{i-1}\overline v_i$ for all $i$.  But this is easy:
\begin{eqnarray*}
x(u_i\widetilde f_i-\widetilde g_{i-1}v_i) & = &
(\widetilde g_{i-1}\widetilde\delta_i-\widetilde\partial_i\widetilde g_i)\widetilde f_i-
\widetilde g_{i-1}(\widetilde\delta_i\widetilde f_i-\widetilde f_{i-1}\widetilde\partial_i)\\
& = & -\widetilde\partial_i\widetilde g_i\widetilde f_i+\widetilde g_{i-1}\widetilde f_{i-1}\widetilde\partial_i\\
& = & 0,
\end{eqnarray*}
hence the claim follows.

Notice that we also have $\theta^F_G([s])=[t]$, when $F$ is minimal.  Therefore, for two arbitrary free resolutions $F$
and $G$ of $M$, that $\theta^G_F([t])=[s]$ follows from composing $\theta^G_F=\theta^G_L\theta^L_F$
where $L$ is a minimal free resolution of $M$.
\end{proof}

\subsection*{Lifting}
Let $B$ be an associative ring, $I$ an ideal of $B$, and $A=B/I$.
Recall that a finitely generated $A$-module $M$ is said to \emph{lift} to $B$, with \emph{lifting} $M'$,
if there exists a finitely generated $B$-module $M'$ such that $M\cong M'\otimes_B A$, and
$\Tor^B_i(M',A)=0$ for all $i\ge 1$.  Similarly, a complex of finitely generated free $A$-modules
\[
F: \cdots \to F_{i+1}\xrightarrow{\partial_{i+1}} F_i \xrightarrow{\partial_i} F_{i-1} \to\cdots
\]
is said to \emph{lift} to $B$, with \emph{lifting} $\widetilde F$,
if there exists a preimage $\widetilde F$ of $F$
\[
\widetilde F:\cdots \to\widetilde F_{i+1}\xrightarrow{\widetilde\partial_{i+1}} \widetilde F_i \xrightarrow{\widetilde\partial_i} \widetilde F_{i-1} \to\cdots
\]
such that $\widetilde\partial_{i-1}\widetilde\partial_i=0$ for all $i$. A close connection between these two notions of lifting will be explained in the next theorem.
We want also to show that when $R=S/(x)$ for $(x,x)$ a pair of exact zero-divisors, the triviality of the canonical element $[s]$ determines whether the module $M$ lifts to $S$.

\begin{theorem}\label{thmlift}
Let $R=S/(x)$ where $S$ is a local ring and $(x,x)$ is a pair of exact zero-divisors in $S$. Then for every finitely generated $R$-module $M$, the following are equivalent.
\begin{enumerate}
\item $M$ lifts to $S$.
\item The canonical element $[s]$ in $\Ext^2_R(M,M)$ is trivial.
\item Every free resolution of $M$ by finitely generated free $R$-modules lifts to $S$.
\item Some free resolution of $M$ by finitely generated free $R$-modules lifts to $S$.
\end{enumerate}
\end{theorem}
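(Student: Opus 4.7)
The plan is to close the cycle $(1)\Rightarrow(2)\Rightarrow(3)\Rightarrow(4)\Rightarrow(1)$, with $(3)\Rightarrow(4)$ being immediate and the other three implications routed through the canonical element machinery of Lemma \ref{canonicallemma}.

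For $(4)\Rightarrow(1)$, suppose a free resolution $F$ of $M$ lifts to $\widetilde F$. The hypothesis $\Ann_S(x)=(x)$ forces the multiplication-by-$x$ map on each free $S$-module $\widetilde F_i$ to have equal kernel and image, so the induced isomorphism $\widetilde F_i/x\widetilde F_i\cong x\widetilde F_i$ assembles into a short exact sequence of $S$-complexes
\[
0\to F\to \widetilde F\to F\to 0.
\]
Since $H_i(F)=0$ for $i\ge 1$, the long exact sequence in homology yields $H_i(\widetilde F)=0$ for $i\ge 1$, so $\widetilde F$ is a free resolution over $S$ of $M':=\operatorname{coker}\widetilde\partial_1$. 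Then $\Tor^S_i(M',R)=H_i(F)=0$ for $i\ge 1$ and $M'\otimes_SR\cong M$, so $M'$ is a lifting of $M$.

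For $(1)\Rightarrow(2)$, let $M'$ be a lift of $M$ and $\widetilde G$ be a free resolution of $M'$ over $S$. The vanishing $\Tor^S_i(M',R)=0$ for $i\ge 1$ makes $G:=\widetilde G\otimes_SR$ a free resolution of $M$ that visibly lifts via $\widetilde G$; since $\widetilde G$ is already a complex over $S$, the canonical endomorphism of $G$ may be taken to be zero, giving $[s_G]=0$, and Lemma \ref{canonicallemma} then propagates this triviality to every free resolution of $M$. For $(2)\Rightarrow(3)$, pick an arbitrary free resolution $F$ of $M$ and an arbitrary preimage $\widetilde F$ with factorizations $\widetilde\partial_{i-1}\widetilde\partial_i=x\widetilde s_i$. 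The triviality of $[s]$ supplies a null-homotopy $s_i=\partial_{i-1}h_i+h_{i-1}\partial_i$; lift the $h_i$ to $\widetilde h_i\colon\widetilde F_i\to\widetilde F_{i-1}$ over $S$ and replace each differential by
\[
\widetilde\partial'_i:=\widetilde\partial_i-x\widetilde h_i.
\]
Since $\widetilde\partial'_i\otimes_SR=\partial_i$, this is still a preimage of $F$, and a direct expansion produces
\[
\widetilde\partial'_{i-1}\widetilde\partial'_i=x\bigl(\widetilde s_i-\widetilde\partial_{i-1}\widetilde h_i-\widetilde h_{i-1}\widetilde\partial_i\bigr)+x^2\,\widetilde h_{i-1}\widetilde h_i.
\]
The quadratic term vanishes because $x^2=0$ in $S$, and the parenthesized linear term vanishes modulo $x$ by the null-homotopy and therefore lies in $x\widetilde F_{i-2}$, so the whole right-hand side lies in $x^2\widetilde F_{i-2}=0$, giving a lift of $F$.

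The main technical obstacle is this modification step: one must simultaneously absorb all the chain-level null-homotopy data into the differentials and verify that the cross terms of the resulting composition cancel. The key identity is $x^2=0$ in $S$, which holds precisely because the pair is $(x,x)$ and would fail for a generic pair $(x,y)$; this is what specializes the theorem to the symmetric case. A secondary but essential ingredient is Lemma \ref{canonicallemma}, which legitimizes calling $[s]$ ``the'' canonical element of $\Ext^2_R(M,M)$ and allows its triviality to be transported between arbitrary free resolutions of $M$.
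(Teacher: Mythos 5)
Your proposal is correct and follows the same cycle $(1)\Rightarrow(2)\Rightarrow(3)\Rightarrow(4)\Rightarrow(1)$ as the paper, with the arguments for $(1)\Rightarrow(2)$ (compute $s$ from the exact preimage coming from a resolution of the lifting, getting $s=0$, then transport via Lemma \ref{canonicallemma}) and $(2)\Rightarrow(3)$ (perturb the differential to $\widetilde\partial_i-x\widetilde h_i$ and kill both the linear and quadratic error terms using $x^2=0$) essentially identical to the paper's. The one place you genuinely diverge is $(4)\Rightarrow(1)$: you package the facts that $\Ann_{\widetilde F_i}(x)=x\widetilde F_i$ on free modules into a short exact sequence of complexes $0\to F\to\widetilde F\to F\to 0$ and read off $\Ho_i(\widetilde F)=0$ for $i\ge 1$ from the long exact homology sequence, whereas the paper runs an element-level diagram chase (given $\widetilde\partial_i(a)=0$, write $a=\widetilde\partial_{i+1}(b)+xc$, then correct $c$ using exactness of $F$ again). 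Your version is more structural and arguably cleaner; the paper's is more elementary and self-contained. Both rest on exactly the same two inputs, namely $\Ann_S(x)=(x)$ applied to free modules and $x^2=0$, and both then identify $M'=\Coker\widetilde\partial_1$ with $\Tor^S_i(M',R)=\Ho_i(F)$ to conclude that $M'$ is a lifting.
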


\begin{proof} $(1)\implies(2)$.  Suppose that $M'$ is a lifting of $M$ to $S$.  Let
\[
\widetilde F:\cdots \to\widetilde F_{2}\xrightarrow{\widetilde\partial_{2}} \widetilde F_1 \xrightarrow{\widetilde\partial_1} \widetilde F_{0}\to 0
\]
be a resolution of $M'$ by finitely generated free $S$-modules.  Since $\Tor^S_i(M',R)=0$
for all $i>0$, $F=\widetilde F \otimes_S R$ is a resolution of $M\cong M'\otimes_S R$
by finitely generated free $R$-modules.  Computing the endomorphism $s$ from the preimage
$\widetilde F$ of $F$, which is exact, we see that $s$ is actually the zero endomorphism,
and is therefore certainly trivial in $\Ext^2_R(M,M)$.

$(2)\implies(3)$. By Lemma \ref{canonicallemma} the canonical element of $\Ext^2_R(M,M)$
is trivial regardless of which resolution by finitely generated free $R$-modules $F$ of $M$
we choose to define it.  Therefore let $F$ be an arbitrary such resolution of $M$, and let $s$ be the canonical chain endomorphism defined as in (\ref{s}) of the subsection above on canonical endomorphisms of complexes.  By assumption $s$ is homotopic to zero.
Therefore there exists a homotopy $h=\{h_i\}$ with $h_i:F_i\to F_{i-1}$ such that
$s_i=\partial_{i-1} h_i + h_{i-1}\partial_i$ for all $i$.  Let $\widetilde F$
be an arbritrary preimage of $F$, with maps $\widetilde\partial$.
Let $\widetilde h_i:\widetilde F_i\to\widetilde F_{i-1}$
be a preimage of $h_i$ for all $i$. There exists $u_i:\widetilde F_i\to \widetilde F_{i-2}$ such that $\widetilde s_i=\widetilde \partial_{i-1} \widetilde h_i + \widetilde h_{i-1}\widetilde\partial_i+xu_i$ for all $i$. Now consider the preimage  $F^\sharp$ of $F$ where we take
$F^\sharp_i=\widetilde F_i$ for all $i$, but we take the maps $\partial^\sharp_i=\widetilde\partial_i-x\widetilde h_i$ instead.
we have
\begin{eqnarray*}
\partial^\sharp_{i-1}\partial^\sharp_i & = & (\widetilde\partial_{i-1}-x\widetilde h_{i-1})(\widetilde\partial_i-x\widetilde h_i)\\
& = & \widetilde\partial_{i-1}\widetilde\partial_i-x(\widetilde\partial_{i-1}\widetilde h_i + \widetilde h_{i-1}\widetilde\partial_i)\\
& = & x(\widetilde s_i -\widetilde\partial_{i-1}\widetilde h_i - \widetilde h_{i-1}\widetilde\partial_i))\\
& = & 0.
\end{eqnarray*}
Thus $F^\sharp$ is a lifting of $F$ to $S$.

$(3)\implies (4)$ is trivial.  To show that $(4) \implies (1)$, assume that $F$ is a free resolution
of $M$ by finitely generated free $R$-modules, which lifts to the complex $\widetilde F$ over $S$.
We claim that $\Ho_i(\widetilde F)=0$ for $i\ne 0$.  Indeed, if $\widetilde\partial_{i}(a)=0$, some
$a\in \widetilde F_i$, then $a=\widetilde\partial_{i+1}(b)+xc$ for some $b\in\widetilde F_{i+1}$ and
$c\in\widetilde F_i$ by exactness of $F$.  Since $x\widetilde\partial_{i}(c)=0$, we have $\widetilde\partial_i(c)\in x\widetilde F_{i-1}$. Again by exactness of $F$ we have
$c=\widetilde\partial_{i+1}(d)+xe$ for some $d\in\widetilde F_{i+1}$ and $e\in\widetilde F_i$.
Therefore $a=\widetilde\partial_{i+1}(b+xd)$. It follows that $\widetilde F$ is a
resolution of $M'=\Ho_0(\widetilde F)$ by finitely generated free $S$-modules, and thus
$M'$ is a lifting of $M$ to $S$.
\end{proof}

We end with an example showing that there are local rings $S$ admitting a pair of exact zero-divisors
$(x,x)$, but no local ring $T$ with regular element $\widetilde x$ such that $S=T/(\widetilde x^2)$
and $x=\widetilde x +(\widetilde x^2)$.  Therefore the notion of lifting modulo an exact zero-divisor
is a more general notion than lifting from modulo a regular element to modulo the square of the regular element.

\begin{example}\label{noembdim}
Let $k$ be field, $S=k[V,X,Y,Z]/I$ where $I$ is the ideal
\[
(V^2,Z^2,XY,VX+XZ,VY+YZ,VX+Y^2,VY-X^2),
 \]
and set $v=V+I$.  Then $(v,v)$ is a pair of exact zero-divisors.  Moreover, it is shown in \cite{AGP} that $S$ does not have an embedded deformation.  Therefore there is no local ring $T$ and non-zero-divisor $\widetilde V$ of $T$ such that $S\cong T/(\widetilde V^2)$.
\end{example}

\section*{Acknowledgments}
This paper is the result of a visit by the first and second authors to the third in October/November 2010. They thank the Department of Mathematics at the University of Texas at Arlington for their kind hospitality.

The authors thank the referee for his/her careful reading, diligence and helpful suggestions.

\end{document}